\newcommand{\newcontent}{\vspace{.08in}\noindent}
\newtheorem{remark}{Remark}
\newtheorem{lemma}{Lemma}
\newtheorem{theorem}{Theorem}
\newtheorem{assumption}{Assumption}
\begin{document}
	
\title{\bf Optimal prediction for kernel-based semi-functional linear regression 
}
\author{Keli Guo${\rm ^{1}}$, Jun Fan${\rm ^{1}}$, Lixing Zhu${\rm ^{1,2}}$ \\~\\
	{\small {\small {\it ${\rm ^1}$ Department of Mathematics, Hong Kong Baptist University, Kowloon, Hong Kong} }}\\
	{\small {\small {\it ${\rm ^2}$ Center for Statistics and Data Science, Beijing Normal University, Zhuhai, China} }}
}
\date{}
\maketitle

\begin{abstract}
	
	\noindent In this paper, we establish minimax optimal rates of convergence for prediction in a  semi-functional linear model that consists of a functional component and a less smooth nonparametric component. Our results reveal that the smoother functional component can be learned with the minimax rate as if the nonparametric component were known. More specifically, a double-penalized least squares method is adopted to estimate both the functional and nonparametric components within the framework of reproducing kernel Hilbert spaces. By virtue of the representer theorem, an efficient algorithm that requires no iterations is proposed to solve the corresponding optimization problem, where the regularization parameters are selected by the generalized cross validation criterion. Numerical studies are provided to demonstrate the effectiveness of the method and to verify the theoretical analysis. 
	
	\newcontent
	{\bf Keywords:} Learning theory, convergence rate, functional linear regression, minimax optimality, reproducing kernel Hilbert spaces
\end{abstract}

\section{Introduction}

 Learning with intrinsically infinite-dimensional data has been an active area of research over the past three decades, which refers to as functional data analysis originally introduced by Ramsay \cite{FDA1, FDA2} in the 1980s. Since then a considerable amount of work has been devoted to functional regression problems where responses or covariates include functional data, see \cite{FDA3, FDA4, FDA6, FDA5, FLR1, FLR2, FLR3} and references therein. Among them one of the most commonly studied framework is the functional linear regression model, where a linear relationship between a scalar response and a functional covariate is postulated. More specifically, classical functional linear model has the form as follows:
\[Y=\int_{\mathcal{T}} X(t)\beta^{0}(t)dt+\epsilon,\]
where $Y$ denotes a centered scalar response, and $X: \mathcal{T}\to\mathbb{R}$ takes values in $\mathcal{L}_2:=\mathcal{L}_2(\mathcal{T})$, which consists of square integrable functions defined on a compact subset $\mathcal{T}$ of $\mathbb{R}$. The slope function is represented by $\beta^0$, and $\epsilon$ is the mean zero random error that is independent of $X$. To overcome the difficulty brought by the infinite-dimensional nature of this setting, a variety of methods have been proposed in the literature for both estimation and prediction tasks. Broadly speaking, most of the existing methods can be categorized into two streams: functional principal component analysis (FPCA)-based methods \cite{FDA6, FLR4, FDA5} and reproducing kernel Hilbert space (RKHS)-based methods \cite{FLR2, FLR3}. As introduced in \cite{FLR4}, the conventional FPCA-based methods employ spectral expansions of the covariance kernel of $X$ and its empirical estimator to learn the slope function. Later, Hall and his collaborators \cite{FLR1, FDA5} established the minimax convergence rate of FPCA-based estimators, which leads to the popularity of this type of methods. However, it is often not realistic to assume that the slope function could be represented by the leading functional principal components of the covariance kernel, and FPCA-based methods may fail in many circumstances. In recent years, Cai and Yuan \cite{FLR2, FLR3} introduced the RKHS framework into functional linear regression in order to overcome the inherent problem of FPCA-based methods by assuming that the target slope function resides in an RKHS. Moreover,  under weaker conditions compared to \cite{FLR1, FDA5}, they showed that the minimax optimal convergence rates for estimation and prediction can be achieved by the proposed smoothness regularization method.

However, in practice, we may encounter datasets including functional and scalar covariates simultaneously. In this situation, the classical functional linear regression model may not be appropriate and sufficient, which motivates researchers to propose new models to deal with functional linear regression with mixed covariates. There are mainly three types of models depending on the parametric or nonparametric natures of functional and scalar covariates. To incorporate the additional scalar covariates, \cite{Partialfunctional1} established the semi-functional partial linear regression model, where a nonparametric model for the functional covariate and a linear model for the scalar covariates are included and considered. Later, \cite{Partialfunctional2, Partialfunctional3} extended this model to the cases of dependent data and unknown error density, respectively. Other than the semi-functional partial linear regression model, \cite{Partialfunctional4} proposed the partially functional linear model, which admits a linear relationship between the response variable and both functional and scalar covariates. Later, \cite{Partialfunctional5} extended this model to deal with the case where high-dimensional scalar covariates and multiple functional covariates are considered. Recently, \cite{Partialfunctional6} first adopted the RKHS framework in the partially functional linear model, but the model cannot handle high-dimensional scalar covariates. \cite{Partialfunctional7} further improved the partially functional linear model by imposing $\ell^1-$penalty on the high-dimensional scalar covariates in order to achieve sparsity. Apart from two aforementioned models, \cite{Partialfunctional8} considered the so-called semi-functional linear regression model that consists of the functional linear regression component and the nonparametric component for the scalar covarites. The estimation approach of \cite{Partialfunctional8} relies on the previously mentioned FPCA-based methods for functional component and the Nadaraya-Watson kernel estimators for the nonprametric component. Later, \cite{Partialfunctional9} focused on a special case of \cite{Partialfunctional8} where the scalar covariate is one-dimensional and spline functions are employed to estimate both the functional and nonparametric components. More recently, \cite{Partialfunctional10} proposed a robust semi-functional linear regression model that is resistant against outliers or heavy-tailed noises.

In this paper, we focus on the semi-functional linear model consisting of both functional and nonparametric components as follows:
\begin{equation}\label{formula1}
	\begin{aligned}
	Y=\int_{\mathcal{T}} X(t)\beta^{0}(t)dt+g^0(\bm{Z})+\epsilon,
	\end{aligned}
\end{equation}
where $X\in\mathcal{X}\subset \mathcal{L}_2$ is a functional covariate, $\bm{Z}\in \mathcal{Z}\subset\mathbb{R}^p$ is a $p$-dimensional vector covariate, and the error $\epsilon$ follows a $\mathcal{N}(0, \sigma^2)$ distribution and is independent of $X$ and $\bm{Z}$. Without loss of generality, we assume that the functional component is smoother than the nonparametric component and also $\mathbb{E}[X|\bm{Z}]=0$. By assuming that both the slope function $\beta^0$ and the nonparametric function $g^0$ reside in some reproducing kernel Hilbert spaces, the goal of this paper is to investigate semi-functional linear regression within the framework of learning theory. 

We summarize our main contributions of the work as follows.
\begin{itemize}
	\item We establish minimax optimal rates of convergence for prediction in the semi-functional linear model. We find that for each component, the convergence rate is not affected by the other component. In other words, the functional component can be estimated with the same convergence rate even if an accurate estimate of the nonparametric component is not available. In the same way, the convergence rate of the nonparametric component remains the same as if the true $\beta^0$ is known. In the literature of additive model, see \cite{Additivemodel}, this phenomenon has been observed and discussed .
	\item We adopt a double-penalized least squares method to estimate both the functional and nonparametric components within the framework of reproducing kernel Hilbert spaces. Moreover, we propose an efficient algorithm that requires no iterations to solve the corresponding problem, where the generalized cross validation criterion (GCV) is applied to select the regularization parameters. Numerical studies are provided to demonstrate the effectiveness of the proposed method and to verify the theoretical analysis.
\end{itemize} 

The rest of the paper is organized as follows. In Section \ref{notation}, we introduce the semi-functional linear regression model within the framework of reproducing kernel Hilbert spaces and present the corresponding computational algorithm. Section \ref{results} presents the theoretical properties of the proposed double-penalized least squares method and derives minimax optimal convergence rates for prediction error of both functional and nonparametric components. Numerical studies are conducted in Section \ref{simulation} to demonstrate the effectiveness of the method and to verify the theoretical analysis. In Section \ref{conclusion}, we provide a summary of this paper. The proofs of our main results and some key lemmas are given in Appendix.

\section{Preliminaries and Methodology} \label{notation}
For any vector $\bm{v}\in \mathbb{R}^n$, we define the $\ell_1$ norm, Euclidean norm and $\ell_\infty$ norm as $\lVert \bm{v}\rVert_1=\sum_{i=1}^{n}\lvert v_i\rvert$, $\lVert \bm{v}\rVert_2=\sqrt{\sum_{i=1}^{n} v^2_i}$ and $\lVert \bm{v}\rVert_\infty=\max_{1\leq i\leq n}\lvert v_i\rvert$ respectively. The empirical norm is  written as $\lVert \bm{v}\rVert^2_n=\bm{v}^T\bm{v}/n$. For any measurable function $f: \mathcal{X} \times \mathcal{Z} \to\mathbb{R}$, we define function norms $\lVert f\rVert=\sqrt{\mathbb{E}f^2(X,\bm{Z})}$, $\lVert f\rVert_\infty=\sup_{X,\bm{Z}}\lvert f(X,\bm{Z})\rvert$ and $\lVert f\rVert^2_n=\frac{1}{n}\sum_{i=1}^{n} f^2(X_i,\bm{Z}_i)$. Moreover, for any functions $\beta_1, \beta_2\in \mathcal{L}_2$, we define $\lVert \beta_1\rVert^2_{\mathcal{L}_2}=\langle \beta_1,\beta_1\rangle_{\mathcal{L}_2}$ and $\langle \beta_1,\beta_2\rangle_{\mathcal{L}_2}=\int_{\mathcal{T}}\beta_1(t)\beta_2(t)dt$. Let $\{a_k: k\geq 1\}$ and $\{b_k: k\geq 1\}$ be two sequences, we write $a_k \asymp b_k$ if $C_1\leq \lim \inf a_k/b_k\leq \lim \sup a_k/b_k\leq C_2$ for some constants $C_1,C_2$. Furthermore, we let $a_k=\mathcal{O}(b_k)$ denote the case where $\lvert a_k\rvert\leq C_3\lvert b_k\rvert$ for some constant $C_3$. 
\subsection{Reproducing kernel Hilbert spaces}
We begin with a brief review of several basic facts about RKHS, which can be found in, for example, \cite{Representer}. We denote $\mathcal{T} \subset \mathbb{R}$ as a compact set. Recall that a reproducing kernel $K$: $\mathcal{T} \times \mathcal{T} \to \mathbb{R}$ is continuous, square integrable, symmetric and nonnegative definite. One reporducing kernel $K$ corresponds to an RKHS $\mathcal{H}_K$ which is a linear functional space equipped with the inner product $\langle \cdot,\cdot \rangle_{K}$. The RKHS $\mathcal{H}_K$ has the following reproducing property:
\[f(t)=\langle K(t,\cdot),  f \rangle_{K},\] 
for any $t\in\mathcal{T}$ and $f\in \mathcal{H}_K$. We denote a centered square integrable stochastic process defined over $\mathcal{T}$ by $X(\cdot)$. The covariance kernel of $X$ can be defined as  
\[C(s,t)=\mathbb{E} [X(s)X(t)], \;\;\;\; \forall t,s \in \mathcal{T},\]
which is real, symmetric and nonnegative definite. 

For any real, symmetric and nonnegative definite function $U:\mathcal{T}\times \mathcal{T}\to \mathbb{R}$, an integral operator $L_U: \mathcal{L}_2\to \mathcal{L}_2$ can be defined as 
\[L_U(f)(\cdot)=\langle U(s,\cdot),f\rangle_{\mathcal{L}_2}=\int_{\mathcal{T}}U(s,\cdot)f(s)ds.\]
By the spectral theorem, we have that $U$ admits the decomposition as follows
\[U(s,t)=\sum_{k\geq1}\theta^U_k\psi^U_k(s)\psi^U_k(t),  \;\;\;\; \forall t,s \in \mathcal{T},\]
where $\theta^U_1\geq \theta^U_2\geq ...\geq0$ are its eigenvalues and $\{\psi^U_k:k\geq1\}$ are the corresponding orthonormalized eigenfunctions. Moreover, we have $L_U(\psi^U_k)=\theta^U_k\psi^U_k$ for $k\geq1.$

Next, we define a linear operator $L_{U^{1/2}}$ as $L_{U^{1/2}}(\psi^U_k)=\sqrt{\theta^U_k}\psi^U_k,$
where 
\[U^{1/2}(s,t)=\sum_{k\geq1}\sqrt{\theta^U_k}\psi^U_k(s)\psi^U_k(t), \;\;\;\; \forall t,s \in \mathcal{T}.\]
We can observe that $L_{U^{1/2}}=L^{1/2}_U$. Define 
$(U_1U_2)(s,t)=\int_{\mathcal{T}}U_1(s,u)U_2(u,t)du,$
then we have $L_{U_1U_2}=L_{U_1}\circ L_{U_2}$. 
Given the reproducing kernel $K$ and covariance kernel $C$, we can define the linear operator $\mathscr{L}:=L_{{K}^{1/2}C{K}^{1/2}}=L_{K^{1/2}}(L_C(L_{K^{1/2}}(f)))$. Then we have for $\mathscr{L}$ that
\[\mathscr{L}\varphi_k=s_k\varphi_k, \;\;\; k\geq1,\]
where $s_1\ge s_2\ge\cdots\geq0$ are its eigenvalues and $\{\varphi_k: k\ge1\}$ are the corresponding orthonormalized eigenfunctions.
We can observe that the eigenvalues of $K$ and the eigenvalues of $C$ jointly determine the eigenvalues $\{s_k: k\ge1\}$ of the linear operator $\mathscr{L}$.

Similarly, given another reproducing kernel $G$: $\mathcal{Z} \times \mathcal{Z}\to \mathbb{R}$ that is continuous, square integrable, symmetric and nonnegative definite, we define its corresponding RKHS as $\mathcal{H}_G$ which is a linear functional space equipped with the inner product $\langle \cdot,\cdot \rangle_{G}$. Denote by $\rho_z$ the marginal distribution of $\bm{Z}$ and by $\mathcal{L}^2_{\rho_z}$ be the space of square-integrable functions on $\mathcal{Z}$ with norm $\|\cdot\|_{\mathcal{L}^2_{\rho_z}}$. We define the integral operator $L_G$ by
\[L_{G}(g)(\cdot)=\langle G(\bm{Z},\cdot),g\rangle_{\mathcal{L}^2_{\rho_z}}=\int_{\mathcal{Z}}G(\bm{Z},\cdot)g(\bm{Z})d\rho_{z}(\bm{Z}).\]
According to the spectral theorem, we have that $G$ admits the decomposition as follows 
\[G(\bm{Z}, \bm{Z'})=\sum_{k\geq1}\tilde{s}_k\tilde{\varphi}_k(\bm{Z})\tilde{\varphi}_k(\bm{Z'}), \;\;\;\; \forall \bm{Z},\bm{Z'} \in \mathcal{Z},\]
where $\tilde{s}_1\ge \tilde{s}_2\ge\cdots>0$ are its eigenvalues and $\{\tilde{\varphi}_k: k\ge1\}$ are the corresponding orthonormalized eigenfunctions. Moreover, we have $L_G\tilde{\varphi}_k=\tilde{s}_k\tilde{\varphi}_k$, for  $k\geq1$.
\subsection{Complexity of RKHS} \label{Rademacher}
To bound empirical processes, we are also interested in some data-dependent estimates of
the complexity of RKHS, namely, Rademacher and Gaussian complexities, see \cite{FLR3} for more details.
\begin{lemma} \label{lemmatech1} Suppose the eigenvalues $\{s_k:k\ge1\}$ of the linear operator $\mathscr{L}$ satisfy $s_k\leq c_1k^{-2r}$ for some constant $0<r<\infty$.
	Consider the following Rademacher type of process:
	\[W_n(b)=\frac1n\sum_{i=1}^{n}\omega_i\langle X_i, b\rangle_{\mathcal{L}_2},\]
	where $\omega'_i$s are i.i.d. Rademacher random variables, i.e., $\mathbb{P}(\omega_i=1)=\mathbb{P}(\omega_i=-1)=\frac12$. Define
	\[\|W_n\|_{\mathcal{B}(\rho, \kappa)}=\sup_{b\in\mathcal{B}(\rho, \kappa)}|W_n(b)|,\]
	where $\mathcal{B}(\rho, \kappa)=\{b\in\mathcal{H}_K: \|b\|_{K}\le \rho \hbox{ and } \|L_C^{\frac12}b\|_{\mathcal{L}_2}\leq \kappa\}$. Then we have \[\mathbb{E}\|W_n\|_{\mathcal{B}(\rho, \kappa)}\leq \frac1{\sqrt{n}}2c^{\frac{1}{4r}}_1\rho^{\frac1{2r}}\kappa^{1-\frac1{2r}}.\]
	Note that this lemma also holds for Gaussian type of process:
		\[G_n(b)=\frac1n\sum_{i=1}^{n}\epsilon_i\langle X_i, b\rangle_{\mathcal{L}_2},\]
	where $\epsilon'_i$s are i.i.d. standard Gaussian variables.
\end{lemma}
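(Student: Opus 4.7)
The plan is to reduce the supremum to a weighted sum indexed by the spectrum of $\mathscr{L}$, bound the random coefficients with Jensen, and then optimize a split of the series.

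First, I would use the isometry between $\mathcal{L}_2$ and $\mathcal{H}_K$: every $b\in\mathcal{H}_K$ can be written as $b=L_{K^{1/2}}\tilde b$ with $\tilde b\in\mathcal{L}_2$ and $\|b\|_K=\|\tilde b\|_{\mathcal{L}_2}$. Expanding $\tilde b=\sum_k a_k\varphi_k$ in the eigenbasis of $\mathscr{L}$, the two constraints defining $\mathcal{B}(\rho,\kappa)$ translate, via $\|L_C^{1/2}b\|_{\mathcal{L}_2}^2=\langle L_{K^{1/2}}L_CL_{K^{1/2}}\tilde b,\tilde b\rangle_{\mathcal{L}_2}=\langle \mathscr{L}\tilde b,\tilde b\rangle_{\mathcal{L}_2}$, into the purely numerical constraints
\begin{equation*}
\sum_{k\ge1}a_k^2\le\rho^2,\qquad \sum_{k\ge1}s_k a_k^2\le\kappa^2.
\end{equation*}
Using self-adjointness of $L_{K^{1/2}}$, the process becomes $W_n(b)=\sum_k a_k\xi_k$, where $\xi_k=\tfrac1n\sum_{i=1}^n\omega_i\langle L_{K^{1/2}}X_i,\varphi_k\rangle_{\mathcal{L}_2}$. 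The Rademacher independence gives $\mathbb{E}\xi_k^2=\tfrac1n\mathbb{E}\langle X,L_{K^{1/2}}\varphi_k\rangle_{\mathcal{L}_2}^2=\tfrac1n\langle \mathscr{L}\varphi_k,\varphi_k\rangle_{\mathcal{L}_2}=s_k/n$, which is the key identity driving the bound.

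Next, for an integer $N$ to be chosen, I would split $W_n(b)=\sum_{k\le N}a_k\xi_k+\sum_{k>N}a_k\xi_k$ and apply Cauchy--Schwarz to each piece against the constraint best suited to it. On the low-frequency piece I would pair $a_k$ with the $L^2$-type constraint,
\begin{equation*}
\Bigl|\sum_{k\le N}a_k\xi_k\Bigr|=\Bigl|\sum_{k\le N}\sqrt{s_k}a_k\cdot\frac{\xi_k}{\sqrt{s_k}}\Bigr|\le\kappa\sqrt{\sum_{k\le N}\xi_k^2/s_k},
\end{equation*}
and on the high-frequency tail I would use the RKHS constraint to get $|\sum_{k>N}a_k\xi_k|\le\rho\sqrt{\sum_{k>N}\xi_k^2}$. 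Taking the supremum over $\mathcal{B}(\rho,\kappa)$ and expectation, Jensen moves $\mathbb{E}$ inside the square roots, producing
\begin{equation*}
\mathbb{E}\|W_n\|_{\mathcal{B}(\rho,\kappa)}\le\kappa\sqrt{N/n}+\rho\sqrt{\tfrac{1}{n}\sum_{k>N}s_k},
\end{equation*}
since $\mathbb{E}\sum_{k\le N}\xi_k^2/s_k=N/n$.

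Finally, inserting the polynomial decay $s_k\le c_1 k^{-2r}$ and estimating the tail by $\sum_{k>N}k^{-2r}\lesssim N^{1-2r}/(2r-1)$, the right-hand side becomes $n^{-1/2}\bigl(\kappa\sqrt{N}+\rho\sqrt{c_1 N^{1-2r}/(2r-1)}\bigr)$. Balancing the two summands yields the optimum $N\asymp(\rho/\kappa)^{1/r}$, at which each term is of order $c_1^{1/(4r)}\rho^{1/(2r)}\kappa^{1-1/(2r)}$, giving the stated bound. The Gaussian version requires no new idea: the only property of the $\omega_i$ used is $\mathbb{E}\omega_i\omega_j=\delta_{ij}$, which holds equally for standard Gaussians, so the identical argument applies verbatim to $G_n$. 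The main obstacle is keeping the Cauchy--Schwarz pairing matched correctly to the two constraints so that the tail sum $\sum_{k>N}\mathbb{E}\xi_k^2=n^{-1}\sum_{k>N}s_k$ is the \emph{only} place where the eigenvalue decay needs to be used; a naive pairing produces a divergent series $\sum 1/s_k$ and no useful bound.
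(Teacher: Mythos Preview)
Your proposal is correct and follows essentially the same approach as the paper: the isometry $b=L_{K^{1/2}}\tilde b$, expansion in the eigenbasis $\{\varphi_k\}$ of $\mathscr{L}$, the identity $\mathbb{E}\xi_k^2=s_k/n$, and optimizing a truncation index are exactly the ingredients of the paper's proof. The only cosmetic difference is that the paper combines the two constraints into a single Cauchy--Schwarz with weights $\min\{\rho^2,\kappa^2/s_k\}$ (yielding $\sqrt{\sum_k \min\{\rho^2 s_k,\kappa^2\}}$) before splitting at $k_0$, whereas you split first and apply Cauchy--Schwarz to each piece separately; the resulting bounds differ only by absolute constants.
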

\begin{proof}
	It is clear that $\mathcal{B}(\rho, \kappa)=L_{K}^{\frac12}(\tilde{\mathcal{F}}_1(\rho, \kappa))$, where
	\[\tilde{\mathcal{F}}_1(\rho, \kappa)=\{\tilde{f}\in \mathcal{L}_2: \|\tilde{f}\|_{\mathcal{L}_2}\le \rho \hbox{ and } \|\mathscr{L}^{\frac12}\tilde{f}\|_{\mathcal{L}_2}\leq \kappa\}.\]
	Denote $\tilde{\mathcal{B}}=\left\{\sum_{k\ge1}\tilde{b}_k\varphi_k: \sum_{k\ge1}\left(\frac{\tilde{b}_k}{\min\{\rho, \kappa/\sqrt{s_k}\}}\right)^2\le1\right\}$. We can easily check that $\tilde{\mathcal{B}}\subset\tilde{\mathcal{F}}_1\subset\sqrt{2}\tilde{\mathcal{B}}$. Therefore, 
	\[\sup_{\tilde{b}\in\tilde{\mathcal{B}}}|W_n(L_{K}^{\frac12}\tilde{b})|\leq\|W_n\|_{\mathcal{B}(\rho,\kappa)}\leq\sqrt{2}\sup_{\tilde{b}\in\tilde{\mathcal{B}}}|W_n(L_K^{\frac12}\tilde{b})|.\]
	By Jensen's inequality,
	\[\mathbb{E}\sup_{\tilde{b}\in\tilde{\mathcal{B}}}|W_n(L_{K}^{\frac12}\tilde{b})|\leq\left(\mathbb{E}\sup_{\tilde{b}\in\tilde{\mathcal{B}}}|W_n(L_{K}^{\frac12}\tilde{b})|^2\right)^{\frac12}.\]
	By Cauchy-Schwartz inequality,
	\[\begin{split}
	|W_n(L_{K}^{\frac12}\tilde{b})|^2&=|\sum_{k\ge1}\tilde{b}_kW_n(L_{K}^{\frac12}\varphi_k)|^2\\
	&\leq\left(\sum_{k\ge1}\frac{b_k^2}{\min\{\rho^2, \kappa^2/s_k\}}\right)\left(\sum_{k\ge1}\min(\rho^2, \kappa^2/s_k)W_n^2(L_{K}^{\frac12}\varphi_k)\right).
	\end{split}\]
	Therefore,
	\[\sup_{\tilde{b}\in\tilde{\mathcal{B}}}|W_n(L_{K}^{\frac12}\tilde{b})|^2\leq\left(\sum_{k\ge1}\min(\rho^2, \kappa^2/s_k)W_n^2(L_{K}^{\frac12}\varphi_k)\right).\]
	Observe that $\mathbb{E}\;W_n^2(L_{K}^{\frac12}\varphi_k)=\mathbb{E}\left(\frac{1}{n}\sum_{i=1}^n\omega_i\langle X_i, L_{K}^{\frac12}\varphi_k\rangle_{\mathcal{L}_2}\right)^2=\frac{1}{n}\mathbb{E}\langle X_i, L_{K}^{\frac12}\varphi_k\rangle^2_{\mathcal{L}_2}=\frac1n s_k.$ This equality also holds for i.i.d Gaussian random variables $\epsilon'_i$s. Thus
	\[\mathbb{E}(\sup_{\tilde{b}\in\tilde{\mathcal{B}}}|W_n(L_{K}^{\frac12}\tilde{b})|^2)\leq\left(\sum_{k\ge1}\min\{\rho^2, \kappa^2/s_k\}\mathbb{E}\;W_n^2(L_{K}^{\frac12}\varphi_k)\right)=\frac{1}{n}\sum_{k\ge1}\min\{\rho^2s_k, \kappa^2\},\]
	which implies that $\mathbb{E}\|W_n\|_{\mathcal{B}(\rho, \kappa)}\leq\sqrt{2}\left(\frac{1}{n}\sum_{k\ge1}\min\{\rho^2s_k, \kappa^2\}\right)^{\frac12}$. Note that \[\begin{split}
	\frac{1}{n}\sum_{k\ge1}\min\{\rho^2s_k, \kappa^2\}&=\frac1n\min_{k_0\ge0}\left(\rho^2\sum_{k\ge k_0}s_k+\kappa^2k_0\right)\\
	&\leq \frac1n\min_{k_0\ge0}\left(c_1\rho^2k_0^{-2r+1}+\kappa^2k_0\right)\\
	&=\frac1n2c^{\frac{1}{2r}}_1\rho^{\frac1{r}}\kappa^{2-\frac1r},
	\end{split}\]
	where the inequality follows from the assumption $s_k\le c_1k^{-2r}$ and the last equality follows by taking $k_0=c^{\frac{1}{2r}}_1\left(\frac{\rho}{\kappa}\right)^{\frac1{r}}.$ Therefore, $\mathbb{E}\|W_n\|_{\mathcal{B}(\rho, \kappa)}\leq \frac1{\sqrt{n}}2c^{\frac{1}{4r}}_1\rho^{\frac1{2r}}\kappa^{1-\frac1{2r}}$.
\end{proof}

\begin{lemma} \label{lemmatech2} Suppose the eigenvalues $\{\tilde{s}_k:k\ge1\}$ of the integral operator $L_G$ satisfy $\tilde{s}_k\leq c_2k^{-2m}$ for some constant $0<m<\infty$. Consider the following Rademacher type of process:
	\[\tilde{W}_n(g)=\frac1n\sum_{i=1}^{n}\omega_i g(\bm{Z}_i),\]
	where $\omega'_i$s are i.i.d. Rademacher random variable, i.e., $\mathbb{P}(\omega_i=1)=\mathbb{P}(\omega_i=-1)=\frac12$. Define
	\[\|\tilde{W}_n\|_{\mathcal{G}_1(\rho, \kappa)}=\sup_{g\in\mathcal{G}(\rho, \kappa)}|\tilde{W}_n(g)|,\]
	where $\mathcal{G}_1(\rho, \kappa)=\{g\in\mathcal{H}_G: \|b\|_{G}\le \rho \hbox{ and } \|g\|\leq \kappa\}$. Then we have \[\mathbb{E}\|\tilde{W}_n\|_{\mathcal{G}_1(\rho, \kappa)}\leq \frac1{\sqrt{n}}2c^{\frac{1}{4m}}_2\rho^{\frac1{2m}}\kappa^{1-\frac1{2m}}.\]
	Note that this lemma also holds for Gaussian type of process:
		\[\tilde{G}_n(g)=\frac1n\sum_{i=1}^{n}\epsilon_ig(\bm{Z}_i),\]
	where $\epsilon'_i$s are i.i.d. standard Gaussian variables.
\end{lemma}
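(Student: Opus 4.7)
The plan is to mirror the proof of Lemma 1 nearly step for step, with the reproducing kernel $G$ on $\mathcal{Z}$ playing the role that the composite kernel $K^{1/2}CK^{1/2}$ played on $\mathcal{T}$. The key conceptual simplification is that in $\mathcal{H}_G$ the evaluation functional is itself an inner product, $g(\bm{Z}_i)=\langle G(\bm{Z}_i,\cdot),g\rangle_{G}$, so one works directly with the eigenvalues $\{\tilde s_k\}$ of $L_G$ without the need to sandwich anything between copies of $L_K^{1/2}$. Under this dictionary the $\mathcal{L}^2_{\rho_z}$-norm $\|g\|$ plays the role of the prediction norm $\|L_C^{1/2}b\|_{\mathcal{L}_2}$ and the RKHS norm $\|g\|_G$ plays the role of $\|b\|_K$.

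First I would expand a generic $g\in\mathcal{H}_G$ in the eigenbasis $\{\tilde\varphi_k\}$ of $L_G$, writing $g=\sum_{k\geq1}g_k\tilde\varphi_k$. Using orthonormality in $\mathcal{L}^2_{\rho_z}$ together with the Mercer characterization of $\mathcal{H}_G$, this gives $\|g\|^2=\sum_k g_k^2$ and $\|g\|_G^2=\sum_k g_k^2/\tilde s_k$, so that $\mathcal{G}_1(\rho,\kappa)$ is the ellipsoid $\{\sum_k g_k^2/\tilde s_k\le\rho^2,\ \sum_k g_k^2\le\kappa^2\}$. I would then approximate this ellipsoid by the coordinate box $\tilde{\mathcal G}=\{\sum_k g_k\tilde\varphi_k:\sum_k g_k^2/\min\{\rho^2\tilde s_k,\kappa^2\}\le 1\}$, which satisfies $\tilde{\mathcal G}\subset\mathcal{G}_1\subset\sqrt{2}\tilde{\mathcal G}$ by exactly the same argument as in Lemma 1.

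Second, I would expand $\tilde W_n(g)=\sum_k g_k\tilde W_n(\tilde\varphi_k)$ and apply Cauchy--Schwarz against the box constraint, then Jensen's inequality in the form $\mathbb{E}\sup\le(\mathbb{E}\sup^2)^{1/2}$. The crucial moment computation becomes especially clean: since $\{\tilde\varphi_k\}$ is orthonormal in $\mathcal{L}^2_{\rho_z}$, one has $\mathbb{E}\tilde W_n^2(\tilde\varphi_k)=\mathbb{E}\tilde\varphi_k^2(\bm{Z})/n=1/n$ for both Rademacher and standard Gaussian $\omega_i$. This yields
\[
\mathbb{E}\|\tilde W_n\|_{\mathcal{G}_1(\rho,\kappa)}\le \sqrt{\tfrac{2}{n}\sum_{k\ge1}\min\{\rho^2\tilde s_k,\kappa^2\}}.
\]
Finally I would insert the polynomial decay $\tilde s_k\le c_2 k^{-2m}$, truncate the sum at a level $k_0$ to bound it by $c_2\rho^2 k_0^{1-2m}+\kappa^2 k_0$, and optimize at $k_0\asymp c_2^{1/(2m)}(\rho/\kappa)^{1/m}$ to obtain the advertised rate $\frac{2}{\sqrt n}c_2^{1/(4m)}\rho^{1/(2m)}\kappa^{1-1/(2m)}$.

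No step poses a serious obstacle; the whole argument is a mechanical transcription of Lemma 1, arguably simpler because the moment calculation no longer carries a factor $s_k$. The one point requiring care is the Mercer identity $\|g\|_G^2=\sum_k g_k^2/\tilde s_k$, i.e.\ the assertion that $\{\sqrt{\tilde s_k}\,\tilde\varphi_k\}$ is an orthonormal basis of $\mathcal{H}_G$; this uses the continuity, square-integrability, and nondegeneracy of $G$ on the compact domain, all of which are already assumed in the statement.
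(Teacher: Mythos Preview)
Your proposal is correct and follows essentially the same argument as the paper. The only cosmetic difference is that you parametrize directly by the Mercer coefficients $g_k$ of $g=\sum_k g_k\tilde\varphi_k$ in $\mathcal{H}_G$, whereas the paper writes $g=L_G^{1/2}\tilde g$ and parametrizes by the coefficients of $\tilde g$ in $\mathcal{L}^2_{\rho_z}$; under the change of variables $g_k=\sqrt{\tilde s_k}\,\tilde g_k$ the two ellipsoid descriptions, moment computations, and final sums $\sum_k\min\{\rho^2\tilde s_k,\kappa^2\}$ coincide exactly.
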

\begin{proof}
	It is clear that $\mathcal{G}_1(\rho, \kappa)=L_{G}^{\frac12}(\tilde{\mathcal{F}}_2(\rho, \kappa))$, where
	\[\tilde{\mathcal{F}}_2(\rho, \kappa)=\{\tilde{f}\in \mathcal{L}^2_{\rho_z}: \|\tilde{f}\|_{\mathcal{L}^2_{\rho_z}}\le \rho \hbox{ and } \|L_{G}^{\frac12}\tilde{f}\|\leq \kappa\}.\]
	Denote $\tilde{\mathcal{G}}_1=\left\{\sum_{k\ge1}\tilde{g}_k\tilde{\varphi}_k: \sum_{k\ge1}\left(\frac{\tilde{g}_k}{\min\{\rho, \kappa/\sqrt{\tilde{s}_k}\}}\right)^2\le1\right\}$. We can easily check that $\tilde{\mathcal{G}}_1\subset\tilde{\mathcal{F}}_2\subset\sqrt{2}\tilde{\mathcal{G}}_1$. Therefore, 
	\[\sup_{\tilde{g}\in\tilde{\mathcal{G}}_1}|\tilde{W}_n(L_{G}^{\frac12}\tilde{g})|\leq\|\tilde{W}_n\|_{\mathcal{G}_1(\rho,\kappa)}\leq\sqrt{2}\sup_{\tilde{g}\in\tilde{\mathcal{G}}_1}|\tilde{W}_n(L_{G}^{\frac12}\tilde{g})|.\]
	By Jensen's inequality,
	\[\mathbb{E}\sup_{\tilde{g}\in\tilde{\mathcal{G}}_1}|\tilde{W}_n(L_{G}^{\frac12}\tilde{g})|\leq\left(\mathbb{E}\sup_{\tilde{g}\in\tilde{\mathcal{G}}_1}|\tilde{W}_n(L_{G}^{\frac12}\tilde{g})|^2\right)^{\frac12}.\]
	By Cauchy-Schwartz inequality,
	\[\begin{split}
	|\tilde{W}_n(L_{G}^{\frac12}\tilde{g})|^2&=|\sum_{k\ge1}\tilde{g}_k\tilde{W}_n(L_{G}^{\frac12}\tilde{\varphi}_k)|^2\leq\left(\sum_{k\ge1}\frac{\tilde{g}_k^2}{\min\{\rho^2, \kappa^2/\tilde{s}_k\}}\right)\left(\sum_{k\ge1}\min(\rho^2, \kappa^2/\tilde{s}_k)\tilde{W}_n^2(L_{G}^{\frac12}\tilde{\varphi}_k)\right).
	\end{split}\]
	Therefore,
	\[\sup_{\tilde{g}\in\tilde{\mathcal{G}}_1}|\tilde{W}_n(L_{G}^{\frac12}\tilde{g})|^2\leq\left(\sum_{k\ge1}\min(\rho^2, \kappa^2/\tilde{s}_k)\tilde{W}_n^2(L_{G}^{\frac12}\tilde{\varphi}_k)\right).\]
	Observe that $\mathbb{E}\;\tilde{W}_n^2(L_{G}^{\frac12}\tilde{\varphi}_k)=\mathbb{E}\left(\frac{1}{n}\sum_{i=1}^n\omega_i L_{G}^{\frac12}\tilde{\varphi}_k\right)^2=\frac{1}{n}\mathbb{E}[\omega^2_i]\mathbb{E}[(L_{G}^{\frac12}\tilde{\varphi}_k)^2] =\frac{1}{n}\mathbb{E}[(\tilde{s}^{\frac{1}{2}}_k \tilde{\varphi}_k)^2]=\frac1n \tilde{s}_k.$ This equality also holds for i.i.d. Gaussian random variables $\epsilon'_i$s. Thus
	\[\mathbb{E}\;\sup_{\tilde{g}\in\tilde{\mathcal{G}}_1}|\tilde{W}_n(L_{G}^{\frac12}\tilde{g})|^2\leq\left(\sum_{k\ge1}\min\{\rho^2, \kappa^2/\tilde{s}_k\}\mathbb{E}\;\tilde{W}_n^2(L_{G}^{\frac12}\tilde{\varphi}_k)\right)=\frac{1}{n}\sum_{k\ge1}\min\{\rho^2\tilde{s}_k, \kappa^2\},\]
	which implies that $\mathbb{E}\|\tilde{W}_n\|_{\mathcal{G}_1(\rho, \kappa)}\leq\sqrt{2}\left(\frac{1}{n}\sum_{k\ge1}\min\{\rho^2\tilde{s}_k, \kappa^2\}\right)^{\frac12}$. Note that \[\begin{split}
	\frac{1}{n}\sum_{k\ge1}\min\{\rho^2\tilde{s}_k, \kappa^2\}&=\frac1n\min_{k_0\ge0}\left(\rho^2\sum_{k\ge k_0}\tilde{s}_k+\kappa^2k_0\right)\\
	&\leq \frac1n\min_{k_0\ge0}\left(c_2\rho^2k_0^{-2m+1}+\kappa^2k_0\right)\\
	&=\frac1n2c^{\frac{1}{2m}}_2\rho^{\frac1{m}}\kappa^{2-\frac1m},
	\end{split}\]
	where the inequality follows from the assumption $\tilde{s}_k\le c_2k^{-2m}$, and the last equality follows by taking $k_0=c^{\frac{1}{2m}}_2\left(\frac{\rho}{\kappa}\right)^{\frac1{m}}.$ Therefore, $\mathbb{E}\|\tilde{W}_n\|_{\mathcal{G}_1(\rho, \kappa)}\leq \frac1{\sqrt{n}}2c^{\frac{1}{4m}}_2\rho^{\frac1{2m}}\kappa^{1-\frac1{2m}}$.
\end{proof}
\subsection{Representer theorem} \label{representersection}
We suppose that an independent random sample $\big\{X_i,\bm{Z}_i,Y_i\big\}^{n}_{i=1}$ is generated from the semi-functional partial linear model in (\ref{formula1}). The double penalized least square estimator of $(\hat{\beta},\hat{g})$ is defined as
\begin{equation} \label{defbetag}
(\hat{\beta},\hat{g})=\displaystyle{\mathop{\arg\min}_{\beta \in \mathcal{H}_K, g \in  \mathcal{H}_G} \left\{\frac{1}{n}\sum_{i=1}^{n} \left(Y_i-\int_{\mathcal{T}} X_i(t)\beta(t)dt-g(\bm{Z}_i)\right)^2+\lambda^2 \lVert \beta \rVert^2_{K}+\xi^2 \lVert g\rVert^2_{G}\right\}}.
\end{equation}
Note that, in (\ref{defbetag}), we define the estimator $(\hat{\beta},\hat{g})$ as the solutions to an infinite-dimensional minimization problem. Next, we show that this infinite-dimensional problem can be reduced to a finite-dimensional optimization problem by virtue of the following representer theorem. Throughout the paper, we write
\begin{equation}
	(L_KX)(\cdot)=\int_\mathcal{T}K(\cdot, s)X(s)ds.
\end{equation}
As mentioned in \cite{FLR2}, $(L_KX)\in \mathcal{H}_K$ for any $X\in \mathcal{L}_2$. Moreover, for any $ \beta\in \mathcal{H}_K$, we have
\begin{equation} \label{observation}
	\int_\mathcal{T} \beta(t)X(t) dt=\langle L_KX,\beta  \rangle_K
\end{equation}
With the above observation, we can prove the following representer theorem which is essential for developing our computational algorithm.
\begin{lemma}\label{representer}
	Let $\hat{\beta}$ and $\hat{g}$ be the minimizer of (\ref{defbetag}), and $\hat{\beta}\in\mathcal{H}_K$, $\hat{g}\in \mathcal{H}_G$. Then there exists $\bm{c}=(c_1,...,c_n)^T\in \mathbb{R}^n$  such that
	\begin{equation} 
		\hat{\beta}(t)=\sum_{i=1}^{n}c_i(L_KX_i)(t).	
	\end{equation}
	Moreover, there exists $\bm{\eta}=(\eta_1,...,\eta_n)^T\in \mathbb{R}^n$ such that
	\begin{equation}
		\hat{g}(\bm{Z})=\sum_{i=1}^{n}\eta_iG(\bm{Z},\bm{Z}_i)
	\end{equation}	
\end{lemma}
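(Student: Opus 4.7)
The plan is to apply the classical Kimeldorf--Wahba orthogonal-decomposition argument separately to each of the two RKHS penalties. Because the regularizers $\lambda^{2}\|\beta\|_K^{2}$ and $\xi^{2}\|g\|_G^{2}$ act on different arguments, the two reductions decouple and can be carried out independently of each other.

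First, for $\hat{\beta}$, I would introduce the finite-dimensional subspace $\mathcal{V}_1=\mathrm{span}\{L_KX_1,\ldots,L_KX_n\}\subset\mathcal{H}_K$ and decompose any competing $\beta\in\mathcal{H}_K$ orthogonally under $\langle\cdot,\cdot\rangle_K$ as $\beta=\beta_{\parallel}+\beta_{\perp}$, with $\beta_{\parallel}\in\mathcal{V}_1$ and $\langle L_KX_i,\beta_{\perp}\rangle_K=0$ for every $i$. Identity (\ref{observation}) then gives
\[
\int_{\mathcal{T}}\beta(t)X_i(t)\,dt=\langle L_KX_i,\beta\rangle_K=\langle L_KX_i,\beta_{\parallel}\rangle_K,
\]
so each residual $Y_i-\int X_i\beta-g(\bm{Z}_i)$, and hence the entire empirical risk, depends on $\beta$ only through $\beta_{\parallel}$. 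On the other hand, Pythagoras in $\mathcal{H}_K$ gives $\|\beta\|_K^{2}=\|\beta_{\parallel}\|_K^{2}+\|\beta_{\perp}\|_K^{2}\geq\|\beta_{\parallel}\|_K^{2}$, with strict inequality unless $\beta_{\perp}=0$. These two facts together force $\hat{\beta}_{\perp}=0$, so $\hat{\beta}=\sum_{i=1}^{n}c_i(L_KX_i)$ for some coefficients $c_1,\ldots,c_n$.

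Next, for $\hat{g}$, I would repeat the argument with $\mathcal{V}_2=\mathrm{span}\{G(\cdot,\bm{Z}_1),\ldots,G(\cdot,\bm{Z}_n)\}\subset\mathcal{H}_G$ and the orthogonal decomposition $g=g_{\parallel}+g_{\perp}$ under $\langle\cdot,\cdot\rangle_G$. The reproducing property of $G$ yields $g(\bm{Z}_i)=\langle G(\cdot,\bm{Z}_i),g\rangle_G=g_{\parallel}(\bm{Z}_i)$, so once more the data-fitting term ignores $g_{\perp}$, while $\|g\|_G^{2}\geq\|g_{\parallel}\|_G^{2}$ with equality only at $g_{\perp}=0$. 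Consequently $\hat{g}_{\perp}=0$ and $\hat{g}=\sum_{i=1}^{n}\eta_i G(\cdot,\bm{Z}_i)$.

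I do not foresee a serious technical obstacle. The $\beta$-decomposition and the $g$-decomposition do not interact, because each penalty involves only one of the two functions and the data-fit splits into terms that are linear in $\beta$ and in $g$ respectively, so the minimization in one variable (with the other fixed) is already enough to rule out the orthogonal component. The only ingredients used are the identity (\ref{observation}), the reproducing property of $G$ stated in Section~\ref{notation}, and the elementary fact (recorded just above the lemma) that $L_KX_i\in\mathcal{H}_K$ for every $i$.
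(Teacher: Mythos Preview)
Your proposal is correct and follows essentially the same Kimeldorf--Wahba orthogonal-decomposition argument as the paper's own proof: decompose $\beta$ and $g$ into components in and orthogonal to the spans of $\{L_KX_i\}$ and $\{G(\cdot,\bm{Z}_i)\}$, use identity (\ref{observation}) and the reproducing property to show the data-fit ignores the orthogonal parts, and use Pythagoras on the penalties to force them to vanish. The only cosmetic difference is notation ($\beta_\parallel,\beta_\perp$ versus the paper's $v_1,v_2$).
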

\begin{proof}
	According to the observations in Section \ref{representersection}, we have $L_KX_i\in\mathcal{H}_K,\; i=1,...,n$. Given $X_1,...,X_n$, any $\beta\in\mathcal{H}_K$ can be expressed as
	\begin{equation}\label{beta}
	\beta(t)=\sum_{i=1}^{n}c_i(L_KX_i)(t)+v_1(t),
	\end{equation}
	where $v_1\in\mathcal{H}_K$ is the orthogonal complement of $\sum_{i=1}^{n}c_i(L_KX_i)(t)$ in $\mathcal{H}_K$. Given $\bm{Z}_1,...,\bm{Z}_n$, any $g\in\mathcal{H}_G$ can be expressed as
	\begin{equation} \label{g}
	\hat{g}(\bm{Z})=\sum_{i=1}^{n}\eta_iG(\bm{Z},\bm{Z}_i)+v_2(\bm{Z}),
	\end{equation}
	and $v_2\in\mathcal{H}_G$ is the orthogonal complement of $\sum_{i=1}^{n}\eta_iG(\cdot,\bm{Z}_i)$ in $\mathcal{H}_G$. In order to prove the representer theorem, we need to verify $v_1=0$ and $v_2=0$. Substituting (\ref{beta}) and (\ref{g}) into (\ref{defbetag}), we obtain
	\begin{equation*}
	\begin{split}
	\int_{\mathcal{T}} X_i(t)\beta(t)dt&=\int_{\mathcal{T}} X_i(t)\left(\sum_{j=1}^{n}c_j(L_KX_j)(t)+v_1(t)\right)dt\\
	&=	\int_{\mathcal{T}} X_i(t)\sum_{j=1}^{n}c_j(L_KX_j)(t)dt +\int_{\mathcal{T}} X_i(t)v_1(t)dt\\
	&=\int_{\mathcal{T}} X_i(t)\sum_{j=1}^{n}c_j(L_KX_j)(t)dt+\langle L_KX_i,v_1\rangle_K\\
	&=\int_{\mathcal{T}} X_i(t)\sum_{j=1}^{n}c_j(L_KX_j)(t)dt,
	\end{split}
	\end{equation*}
	where the third equality is obtained by (\ref{observation}), and the last equality follows from the fact that $v_1\in\mathcal{H}_K$ is orthogonal to $(KX_i)(t)$. Therefore, this term does not depend on $v_1$. In addition, since $v_1\in\mathcal{H}_K$ is orthogonal to $\sum_{i=1}^{n}c_i(L_KX_i)(t)$ in $\mathcal{H}_K$, we have
	\[\lambda^2 \lVert \beta \rVert^2_{K}=\lambda^2\left\lVert\sum_{i=1}^{n}c_i(L_KX_i)+v_1\right\rVert^2_{K}=\lambda^2\left\lVert\sum_{i=1}^{n}c_i(L_KX_i)\right\rVert^2_{K}+\lambda^2\left\lVert v_1\right\rVert^2_{K}.\]
	This term is minimized when $v_1=0$. Meanwhile, by the orthogonal decomposition of $g(\bm{Z}_i)$ and the reproducing property, we have
	\begin{equation*}
	\begin{split}
	g(\bm{Z}_i)&=\langle g,G(\cdot,\bm{Z}_i)\rangle_G\\
	&=\left\langle\sum_{j=1}^{n}\eta_jG(\cdot,\bm{Z}_j)+v_2,G(\cdot,\bm{Z}_i)\right\rangle_G\\
	&=\sum_{j=1}^{n}\eta_j\langle G(\cdot,\bm{Z}_j),G(\cdot,\bm{Z}_i)\rangle_G+\langle v_2,G(\cdot,\bm{Z}_i)\rangle_G\\
	&=\sum_{j=1}^{n}\eta_j G(\bm{Z}_i,\bm{Z}_j),
	\end{split}
	\end{equation*}
	where the last equality follows from the fact that $v_2\in\mathcal{H}_G$ is orthogonal to $G(\cdot,\bm{Z}_i)$. Therefore, this term does not depend on $v_2$. In addition, since $v_2\in\mathcal{H}_G$ is orthogonal to $\sum_{i=1}^{n}\eta_iG(\cdot,\bm{Z}_i)$ in $\mathcal{H}_G$, we have
	\[\xi^2 \lVert g\rVert^2_{G}=\xi^2\left\lVert\sum_{i=1}^{n}\eta_iG(\cdot,\bm{Z}_i)+v_2\right\rVert^2_{G}=\xi^2\left\lVert\sum_{i=1}^{n}\eta_iG(\cdot,\bm{Z}_i) \right\rVert^2_{G}+\xi^2\left\lVert v_2\right\rVert^2_{G}.\]
	This term is minimized when $v_2=0$. Consequently, the minimizers of (\ref{defbetag}) take the following forms
	\begin{equation*} 
	\hat{\beta}(t)=\sum_{i=1}^{n}c_i(L_KX_i)(t) \quad \hbox{ and }\quad
	\hat{g}(\bm{Z})=\sum_{i=1}^{n}\eta_iG(\bm{Z},\bm{Z}_i).
	\end{equation*}
	This completes the proof.
\end{proof}


\subsection{Computational algorithm} \label{algorithm}
The representer theorem indicates that the solution can be obtained in a finite-dimensional subspace even if the minimization problem is based on a infinite-dimensional space. As a result, we only need to estimate the coefficients $\bm{c}$ and $\bm{\eta}$. In this section, we provide an algorithm to evaluate the coefficients $\bm{c}$ and $\bm{\eta}$. We aim to solve the following minimization problem 
\begin{equation} \label{problem1}
	\min_{\beta \in \mathcal{H}_K, g \in  \mathcal{H}_G} \left\{\frac{1}{n}\sum_{i=1}^{n} \left(Y_i-\int_{\mathcal{T}} X_i(t)\beta(t)dt-g(\bm{Z}_i)\right)^2+\lambda^2 \lVert \beta \rVert^2_{K}+\xi^2 \lVert g\rVert^2_{G}\right\}.
\end{equation}
By Lemma \ref{representer}, it is sufficient to consider $\beta(t)$ with the following form
$$\beta(t)=\sum_{i=1}^{n}c_i\int_{\mathcal{T}}X_i(t)K(t,s)ds,$$
for some $\bm{c}=(c_1,...,c_n)^T\in \mathbb{R}^n$. By Lemma \ref{representer}, we can write the solution of (\ref{problem1}) in terms of $g$ as
$$g(\bm{Z})=\sum_{i=1}^{n} \eta_i G(\bm{Z}_i, \bm{Z})$$
for some $\bm{\eta}=(\eta_1,...,\eta_n)^T\in \mathbb{R}^n$. We denote $\mathbf{G}$ as a $n\times n$ matrix, where the $(i,j)$-th entry is $G(\bm{Z}_i,\bm{Z}_j)$. Then we can reformulate (\ref{problem1}) as 
\begin{equation} \label{problem2}
	\min_{\bm{c} \in \mathbb{R}^n, \bm{\eta} \in \mathbb{R}^{n}} \left\{ \left\lVert \bm{Y}-\mathbf{\Sigma} \bm{c}-\mathbf{G}\bm{\eta}\right\rVert^2_n+\lambda^2 \bm{c}^T\mathbf{\Sigma}\bm{c}+\xi^2\bm{\eta}^T\mathbf{G}\bm{\eta}\right\}.
\end{equation}
where $\mathbf{\Sigma}$ is a $n\times n$ matrix with $\Sigma_{ij}=\int_{\mathcal{T}}\int_{\mathcal{T}}X_i(t)K(t,s)X_j(s)dsdt$.

Minimizing (\ref{problem2}) with respect to $\bm{c}$, we obtain
\begin{equation} \label{c}
	\begin{split}
		\hat{\bm{c}}=(\mathbf{\Sigma}^T\mathbf{\Sigma}+n\lambda^2\mathbf{\Sigma})^{-1}\mathbf{\Sigma}^T(\bm{Y}-\mathbf{G}\bm{\eta}).
	\end{split}
\end{equation}
Then we have
$\int_{\mathcal{T}}\bm{X}(t)\hat{\beta}(t)dt=\mathbf{\Sigma}\hat{\bm{c}}=\mathbf{\Sigma}(\mathbf{\Sigma}^T\mathbf{\Sigma}+n\lambda^2\mathbf{\Sigma})^{-1}\mathbf{\Sigma}^T(\bm{Y}-\mathbf{G}\bm{\eta})	=\mathbf{H}_{\lambda}(\bm{Y}-\mathbf{G}\bm{\eta})$,
where $\mathbf{H}_{\lambda}:=\mathbf{\Sigma}(\mathbf{\Sigma}^T\mathbf{\Sigma}+n\lambda^2\mathbf{\Sigma})^{-1}\mathbf{\Sigma}^T$. Now we can reformulate (\ref{problem2}) as
\begin{equation} \label{problem3}
	\min_{\bm{\eta} \in \mathbb{R}^{n}} \left\{ \left\lVert (\mathbf{I}-\mathbf{H}_{\lambda})(\bm{Y}-\mathbf{G}\bm{\eta})\right\rVert^2_n +\xi^2\bm{\eta}^T\mathbf{G}\bm{\eta}\right\}.
\end{equation}
To solve (\ref{problem3}), we minimize (\ref{problem3}) with respect to $\bm{\eta}$ and obtain
\begin{equation} \label{etahat}
	\hat{\bm{\eta}}=\left[\mathbf{G}^T(\mathbf{I}-\mathbf{H}_{\lambda})^T(\mathbf{I}-\mathbf{H}_{\lambda})\mathbf{G}+n\xi^2\mathbf{G}\right]^{-1}\mathbf{G}^T(\mathbf{I}-\mathbf{H}_{\lambda})^T(\mathbf{I}-\mathbf{H}_{\lambda})\bm{Y}.
\end{equation}
Then we substitute $\hat{\bm{\eta}}$ into (\ref{c}) and obtain $\hat{\bm{c}}$. Now we can get the regularized estimators
\begin{equation*}
	\begin{split}
		&\hat{\beta}(t)=\sum_{i=1}^{n}\hat{c}_i\int_{\mathcal{T}}X_i(t)K(t,s)ds,\\
		&\hat{g}(\bm{Z})=\sum_{i=1}^{n} \hat{\eta}_i G(\bm{Z}_i, \bm{Z}).
	\end{split}
\end{equation*}
The procedure of solving the minimization problem (\ref{problem1}) is summarized in Algorithm \ref{algorithm1}.

\begin{algorithm}[htb]
	\caption{Solution to the optimization problem in (\ref{problem1}).} 
	\label{alg:Framwork} 
	\begin{algorithmic}[1] 
		\REQUIRE ~~\\ 
		Responses $\bm{Y}=(Y_1,...,Y_n)^T$;\\
		Functional covariate $X_1(t),....,X_n(t)$ and vector covariate $\bm{Z}_1,....,\bm{Z}_n$;\\
		Regularization parameters $\lambda$ and $\xi$;
		\ENSURE ~~\\ 
		\STATE Computing $\mathbf{\Sigma}$ and $\mathbf{G}$; 
		\STATE $\mathbf{H}_{\lambda}\leftarrow \mathbf{\Sigma}(\mathbf{\Sigma}^T\mathbf{\Sigma}+n\lambda^2\mathbf{\Sigma})^{-1}\mathbf{\Sigma}^T$; 
		\STATE $\hat{\bm{\eta}}\leftarrow\left[\mathbf{G}^T(\mathbf{I}-\mathbf{H}_{\lambda})^T(\mathbf{I}-\mathbf{H}_{\lambda})\mathbf{G}+n\xi^2\mathbf{G}\right]^{-1}\mathbf{G}^T(\mathbf{I}-\mathbf{H}_{\lambda})^T(\mathbf{I}-\mathbf{H}_{\lambda})\bm{Y}$; 
		\STATE 
		$\hat{\bm{c}}\leftarrow(\mathbf{\Sigma}^T\mathbf{\Sigma}+n\lambda^2\mathbf{\Sigma})^{-1}\mathbf{\Sigma}^T(\bm{Y}-\mathbf{G}\hat{\bm{\eta}})$; 
		\RETURN $\hat{\bm{\eta}}$ and $\hat{\bm{c}}$; 
	\end{algorithmic}
	\label{algorithm1}
\end{algorithm}

\section{Main results}  \label{results}
In order to present the theoretical results for the proposed estimators, we first define

\begin{equation*}
	\mathcal{F}:=\left\{ f(X,\bm{Z})=\int_{\mathcal{T}} X(t)\beta(t)dt+g(\bm{Z}), \beta\in \mathcal{H}_K, g \in \mathcal{H}_G\right\},
\end{equation*}
and
\begin{equation*}
	f^{0}(X,\bm{Z}):=\int_{\mathcal{T}} X(t)\beta^{0}(t)dt+g^0(\bm{Z}).
\end{equation*}
Moreover, we let
\begin{equation} \label{deftau}
\tau(f) :=\lVert f \rVert+\lambda \lVert\beta\rVert_{K}+\xi  \lVert g\rVert_{G}   ,
\end{equation}
and 
\begin{equation}\label{deftauRI}
\tau_I(\beta) :=\left\lVert \int_{\mathcal{T}} X(t)\beta(t)dt  \right\rVert+\lambda \lVert\beta\rVert_{K}.
\end{equation}
Next, we introduce a set of technical assumptions in this paper. 
\begin{assumption} \label{error}(Gaussian assumption) The errors $\epsilon_1,..., \epsilon_n$ are i.i.d. standard Gaussian random variables and indepenent of $\left\{X_i,\bm{Z}_i\right\}^{n}_{i=1}$. 	
\end{assumption}

Gaussian assumption is quite restrictive, but it is common in literature, see \cite{PLM} and \cite{Partialfunctional7}. We can always relax this assumption to sub-Gaussian errors. For the sake of simplicity, we assume a unit variance for the Guassian errors. However, the variance is usually unknown in reality. We will not discuss the variance problem here since it is out of the scope of this paper. Readers may refer to \cite{unknownvariance1} and \cite{unknownvariance2} for several existing methods of coping with unknown variance. 

\begin{assumption} \label{design} (Design assumption) 
	(1) $\lVert X \rVert_{\mathcal{L}_2} \leq D_x < \infty$ almost surely. (2) $D_K:=\sup_{t\in \mathcal{T}}\left|K(t,t)\right|<\infty$. (3) $D_G:=\sup_{\bm{z}\in \mathbb{R}^p}\left|G(\bm{z},\bm{z})\right|<\infty$.
\end{assumption}

Although (1) in Assumption \ref{design} seems to be a quite strong assumption, a non-bounded distribution or process can usually be approximated with its truncated version. (2) and (3) in Assumption \ref{design} are not restrictive. The frequently used Gaussian kernel satisfies this kind of assumption.

\begin{assumption} \label{smoothness} (Eigenvalue decay assumption)
	(1) For the eigenvalues $\{s_k: {k\ge1}\}$ of the integral operator $\mathscr{L}$ and some constant $0<r<\infty$, it holds that $s_k\leq c_1k^{-2r}$ . (2) For the eigenvalues $\{\tilde{s}_k: {k\ge1}\}$ of the integral operator $L_G$ and some constant $0<m<\infty$, it holds that $\tilde{s}_k\leq c_2k^{-2m}$ .
\end{assumption}
The smoothness of the functional component is controlled by the rate of decay of the eigenvalues $\{s_k: {k\ge1}\}$, i.e. $r$. The smoothness of $g^0$ is controlled by the rate of decay of the eigenvalues $\{\tilde{s}_k: {k\ge1}\}$, i.e. $m$. Note that the eigenvalues $\{s_k: {k\ge1}\}$ depend on both of covariance function $C$ and the reproducing kernel $K$, so the smoothness of the functional component depends on both $K$ and $C$. As for the nonparametric component, its smoothness only depends on the reproducing kernel $G$ since the eigenvalues $\{\tilde{s}_k: {k\ge1}\}$ only rely on $G$. Without loss of generality, we consider the case where the smoothness of the functional component is larger than the smoothness of the nonparametric component, that is, $r>m$.

In the following, we present the theoretical results we obtained. The overall convergence rate of this method and the convergence rate for the nonparametric component are provided by Theorem \ref{thm1}. A more refined prediction error for the functional component is given by Theorem \ref{thm2}. We provide detailed proofs in Appendix.

\begin{theorem} \label{thm1}
	Under Assumptions \ref{error}, \ref{design} and \ref{smoothness}, we let $L_{\mathcal{T}}$ be a suitable small constant only depending on $D_x$, $D_{K}$, $D_{G}$, $c_1$, $c_2$, $r$ and $m$. Suppose there is a constant R satisfying
	\begin{equation} \label{thm1assump}
		\begin{split}
		&\lambda^2 \lVert \beta^0 \rVert^2_{K}+\xi^2 \lVert g^0 \rVert^2_{G} \leq \delta^2 R^2,\\
		&\lambda \leq \xi \leq R,
		\end{split}
	\end{equation}
	where $\delta >0$ is a small constant. Then $\tau(\hat{f}-f^0)\leq R$ holds with probability at least $1-3\exp\left[-nL^2_{\mathcal{T}}\lambda\right]$.
\end{theorem}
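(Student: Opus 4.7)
I would use a standard basic-inequality-plus-concentration argument, adapted to the two penalties $\lambda^2\|\beta\|_K^2$ and $\xi^2\|g\|_G^2$. First I would write down the optimality condition for $(\hat\beta,\hat g)$ in \eqref{defbetag}: expanding $\|Y-\hat f\|_n^2+\lambda^2\|\hat\beta\|_K^2+\xi^2\|\hat g\|_G^2\leq\|Y-f^0\|_n^2+\lambda^2\|\beta^0\|_K^2+\xi^2\|g^0\|_G^2$ via $Y_i=f^0(X_i,\bm Z_i)+\epsilon_i$ yields the basic inequality
\[
\|\hat f-f^0\|_n^2+\lambda^2\|\hat\beta\|_K^2+\xi^2\|\hat g\|_G^2 \;\leq\; 2G_n(\hat\beta-\beta^0)+2\tilde G_n(\hat g-g^0)+\lambda^2\|\beta^0\|_K^2+\xi^2\|g^0\|_G^2,
\]
where $G_n,\tilde G_n$ are the Gaussian processes in Lemmas \ref{lemmatech1} and \ref{lemmatech2}.

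Next comes a convexity/contraction reduction. Suppose for contradiction that $\tau(\hat f-f^0)>R$ and set $s=R/\tau(\hat f-f^0)\in(0,1)$, $\tilde f=(1-s)f^0+s\hat f$ with components $\tilde\beta=(1-s)\beta^0+s\hat\beta$ and $\tilde g=(1-s)g^0+s\hat g$. Since $\tau$ is a seminorm, $\tau(\tilde f-f^0)=R$; convexity of the penalized objective in \eqref{defbetag} gives $\mathcal J(\tilde f)\leq s\mathcal J(\hat f)+(1-s)\mathcal J(f^0)\leq\mathcal J(f^0)$, so the basic inequality persists for $\tilde f$. Applying $\|\tilde\beta\|_K^2\geq\tfrac12\|\tilde\beta-\beta^0\|_K^2-\|\beta^0\|_K^2$ (and similarly for $\tilde g$) together with the hypothesis $\lambda^2\|\beta^0\|_K^2+\xi^2\|g^0\|_G^2\leq\delta^2R^2$ produces
\[
\|\tilde f-f^0\|_n^2+\tfrac{\lambda^2}{2}\|\tilde\beta-\beta^0\|_K^2+\tfrac{\xi^2}{2}\|\tilde g-g^0\|_G^2 \;\leq\; 2G_n(\tilde\beta-\beta^0)+2\tilde G_n(\tilde g-g^0)+4\delta^2R^2.
\]

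Then I would control the noise. From $\tau(\tilde f-f^0)=R$ one has $\|\tilde\beta-\beta^0\|_K\leq R/\lambda$ and $\|\tilde g-g^0\|_G\leq R/\xi$; the assumption $\mathbb E[X\mid\bm Z]=0$ makes the population norm split as $\|\tilde f-f^0\|^2=\|L_C^{1/2}(\tilde\beta-\beta^0)\|_{\mathcal L_2}^2+\|\tilde g-g^0\|^2$, so each summand is bounded by $R^2$. Hence $\tilde\beta-\beta^0\in\mathcal B(R/\lambda,R)$ and $\tilde g-g^0\in\mathcal G_1(R/\xi,R)$, and Lemmas \ref{lemmatech1}--\ref{lemmatech2} combined with Borell--TIS concentration (the processes are Gaussian by Assumption \ref{error}) give that, with probability at least $1-2\exp(-cnL_{\mathcal T}^2\lambda)$, $|G_n(\tilde\beta-\beta^0)|+|\tilde G_n(\tilde g-g^0)|\leq CL_{\mathcal T}R^2$, provided $L_{\mathcal T}$ is small. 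A further uniform Bernstein/Talagrand step over $\{f\in\mathcal F:\tau(f-f^0)\leq R\}$, whose metric entropy is again controlled by Assumption~\ref{smoothness}, supplies $\|\tilde f-f^0\|^2\leq 2\|\tilde f-f^0\|_n^2+CL_{\mathcal T}^2R^2$ on a third event of probability $1-\exp(-cnL_{\mathcal T}^2\lambda)$, accounting for the third exponential in the theorem. Combining these with the inequality from Step~2 and using $\lambda\leq\xi\leq R$ yields $\tau(\tilde f-f^0)^2\leq C(\delta^2+L_{\mathcal T})R^2$; choosing $L_{\mathcal T}$ (and $\delta$) small enough forces the right-hand side below $R^2$, contradicting $\tau(\tilde f-f^0)=R$. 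Therefore $\tau(\hat f-f^0)\leq R$.

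The hardest step I anticipate is the uniform empirical-versus-population comparison: the class $\{f\in\mathcal F:\tau(f-f^0)\leq R\}$ combines an RKHS ball for $\beta$ with an RKHS-plus-$L^2$ ball for $g$, and the cross term $\tfrac{1}{n}\sum_i[\int_{\mathcal T}X_i(\tilde\beta-\beta^0)]\cdot(\tilde g-g^0)(\bm Z_i)$, whose population expectation vanishes thanks to $\mathbb E[X\mid\bm Z]=0$, has to be handled empirically by a separate concentration bound before the Bernstein step can be invoked. Arranging the absolute constants so that the three high-probability events compose into exactly $3\exp(-nL_{\mathcal T}^2\lambda)$, and calibrating $L_{\mathcal T}$ small enough to absorb the constants produced at each step, will be the main bookkeeping hurdle.
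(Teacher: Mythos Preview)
Your proposal is correct and follows essentially the same approach as the paper's own proof: the basic inequality from optimality, localization onto the sphere $\tau(\tilde f-f^0)=R$ via a convex combination, control of the two Gaussian noise terms through Lemmas~\ref{lemmatech1}--\ref{lemmatech2} together with Gaussian concentration, and a uniform empirical-versus-population comparison handled by symmetrization and Talagrand's inequality (this is precisely the van~de~Geer additive-model machinery of \cite{Additivemodel,PLM}, whose $\tau$-functional and probability structure the theorem reproduces). Your identification of the cross term and the $\|\cdot\|_n$ versus $\|\cdot\|$ comparison as the delicate points is accurate; the paper's proof dispatches them in the same way you outline, using $\mathbb{E}[X\mid\bm Z]=0$ for the population decoupling and the boundedness in Assumption~\ref{design} to invoke Talagrand.
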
	

\begin{theorem} \label{thm2}
	Under the same assumptions of Theorem \ref{thm1}, we let $\L_{\tau_I}$ be a suitable small constant only depending on $D_x$, $D_{K}$, $D_{G}$, $c_1$ and $r$. Suppose there is a constant $R_I$ satisfying 
	\begin{equation} \label{thm2assump}
	\begin{split}
		&\lambda^2 \lVert \beta^0 \rVert^2_{K}\leq \delta^2_I R^2_I,\\
		&\lambda \leq R_I \leq R \leq 1,\\
		&R_I\geq R\frac{\lambda}{\xi},
	\end{split}
  \end{equation}
where constant $R$ satisfies (\ref{thm1assump}) and $\delta_I >0$ is a small constant. It holds that $\tau_I(\hat{\beta}-\beta^0)\leq R_I$ with probability at least $1-7\exp\left[-nL^2_{\mathcal{T}_I}\lambda\right]$.
\end{theorem}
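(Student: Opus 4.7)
The plan is to refine Theorem~\ref{thm1} so that the sharper rate $R_I$, which reflects only the smoother functional component, is obtained for $\tau_I(\hat\beta-\beta^0)$ alone. The starting point is the basic inequality from the defining minimisation, comparing the feasible pair $(\beta^0,\hat g)$ with the minimiser $(\hat\beta,\hat g)$. Writing $\Delta\beta:=\hat\beta-\beta^0$, $\Delta g:=\hat g-g^0$ and using $Y_i=f^0(X_i,\bm{Z}_i)+\epsilon_i$, expansion (the $\xi^2\|\hat g\|_G^2$ terms cancel) yields
\[
\bigl\|\textstyle\int X\,\Delta\beta\bigr\|_n^{2}+2\bigl\langle \textstyle\int X\,\Delta\beta,\,\Delta g\bigr\rangle_n+\lambda^{2}\|\Delta\beta\|_K^{2}\ \le\ \frac{2}{n}\sum_{i=1}^{n}\epsilon_i\!\int X_i\Delta\beta\ -\ 2\lambda^{2}\langle \beta^0,\Delta\beta\rangle_K,
\]
so the task reduces to bounding the three right-hand-side pieces uniformly over a ball in $\tau_I$ and upgrading the empirical norm on the left to the population norm appearing in $\tau_I$.

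For the noise term I would apply Lemma~\ref{lemmatech1} to the Gaussian process $b\mapsto n^{-1}\sum_i\epsilon_i\langle X_i,b\rangle_{\mathcal{L}_2}$ localised to $\mathcal{B}(\rho_\beta,\kappa_\beta)$ with $\rho_\beta\asymp R_I/\lambda$ and $\kappa_\beta\asymp R_I$, and peel dyadically over the shells $\{R_I 2^{j-1}<\tau_I(\beta-\beta^0)\le R_I 2^j\}$; the resulting deviation of order $R_I^{2-1/(2r)}/(\lambda^{1/(2r)}\sqrt{n})$ is absorbed into $R_I^{2}$ for $R_I$ at the minimax functional rate. The regularisation cross-term is dispatched by Cauchy--Schwarz and AM--GM: $2\lambda^{2}|\langle\beta^0,\Delta\beta\rangle_K|\le \tfrac{1}{2}\lambda^{2}\|\Delta\beta\|_K^{2}+2\lambda^{2}\|\beta^0\|_K^{2}$, whose second summand is at most $2\delta_I^{2}R_I^{2}$ by the hypothesis $\lambda^{2}\|\beta^0\|_K^{2}\le\delta_I^{2}R_I^{2}$, harmless for $\delta_I$ small.

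The hard part is controlling the cross term $\langle \int X\,\Delta\beta,\Delta g\rangle_n$. Its population analogue $\mathbb{E}[(\int X\,\Delta\beta)\,\Delta g(\bm{Z})]$ vanishes by the standing assumption $\mathbb{E}[X\mid\bm{Z}]=0$, so it is a pure, centred empirical-process fluctuation of a bilinear form. I would symmetrise in one argument and invoke both Rademacher bounds of Lemmas~\ref{lemmatech1} and~\ref{lemmatech2}, taken over the $\tau_I$-ball of radius $R_I$ in $\beta$ and the localising $g$-ball $\{g\in\mathcal{H}_G:\xi\|g\|_G\le R,\ \|g\|\le R\}$ supplied by Theorem~\ref{thm1} (observe that $\|\Delta g\|\le\|\hat f-f^0\|\le R$, since the population decomposition $\|\hat f-f^0\|^{2}=\|\int X\,\Delta\beta\|^{2}+\|\Delta g\|^{2}$ holds again by $\mathbb{E}[X\mid\bm{Z}]=0$). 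The resulting deviation is of order $n^{-1/2}(R/\xi)^{1/(2m)}R^{1-1/(2m)}(R_I/\lambda)^{1/(2r)}R_I^{1-1/(2r)}$, and the hypothesis $R_I\ge R\lambda/\xi$, together with $\lambda\le\xi$ and $R\le 1$, is exactly what is needed to make this dominated by $R_I^{2}$. This is where the oracle phenomenon materialises: the contribution of the slower nonparametric error $R$ to the functional direction is damped by the ratio $\lambda/\xi$ of regularisation parameters.

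Finally, a further uniform concentration step on $\mathcal{B}(\rho_\beta,\kappa_\beta)$ transfers $\|\int X\,\Delta\beta\|_n$ to $\|\int X\,\Delta\beta\|$, allowing the constant on the left to be replaced by $\tfrac{1}{2}$ up to lower-order error. The proof then closes by the standard peeling and contradiction device: on the intersection of seven high-probability events (the three events underlying Theorem~\ref{thm1}, the Gaussian noise deviation, the bilinear cross-term deviation, and two empirical-versus-population norm equivalences), the supposition $\tau_I(\hat\beta-\beta^0)>R_I$ contradicts the basic inequality. This delivers the stated bound with probability at least $1-7\exp[-nL_{\mathcal{T}_I}^{2}\lambda]$. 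The single most delicate ingredient is the cross-term bound, because it is the only place where the smoothness of $g^0$ enters the analysis of the functional component, and its decoupling via $\mathbb{E}[X\mid\bm{Z}]=0$ is what prevents the slower rate $m$ from contaminating the functional rate $r$.
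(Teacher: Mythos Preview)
The detailed proof is deferred to an Appendix that is not contained in the \LaTeX\ source supplied, so a line-by-line comparison with the authors' argument is impossible; what follows assesses your sketch against the machinery the paper does make available (Lemmas~\ref{lemmatech1}--\ref{lemmatech2}, the assumption $\mathbb{E}[X\mid\bm Z]=0$, and Theorem~\ref{thm1}). Your basic inequality, obtained by comparing $(\hat\beta,\hat g)$ with $(\beta^0,\hat g)$ so that the $\xi^2\|\hat g\|_G^2$ terms cancel, is correct and is the natural starting point; the treatment of the Gaussian noise via Lemma~\ref{lemmatech1}, of $2\lambda^2\langle\beta^0,\Delta\beta\rangle_K$ via Cauchy--Schwarz, of the norm transfer, and of the localisation $\|\Delta g\|\le R$ through the population orthogonality $\|\hat f-f^0\|^2=\|\int X\Delta\beta\|^2+\|\Delta g\|^2$ are all sound. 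The event count of seven and the role you assign to the constraint $R_I\ge R\lambda/\xi$ match the theorem statement, so the overall architecture is very likely the intended one.

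The place where the sketch is not yet a proof is the bilinear cross term $\langle\int X\Delta\beta,\Delta g\rangle_n$. You claim a deviation of order $n^{-1/2}(R/\xi)^{1/(2m)}R^{1-1/(2m)}(R_I/\lambda)^{1/(2r)}R_I^{1-1/(2r)}$, but ``symmetrise in one argument and invoke both Rademacher bounds of Lemmas~\ref{lemmatech1} and~\ref{lemmatech2}'' is not a mechanism that yields a \emph{product} of two localised complexities: those lemmas control linear Rademacher/Gaussian processes, not a process indexed by pairs $(\beta,g)$. A crude contraction step with the envelope $\|\Delta g\|_\infty\le\sqrt{D_G}\,R/\xi$ gives only $(R/\xi)\cdot n^{-1/2}(R_I/\lambda)^{1/(2r)}R_I^{1-1/(2r)}$, which at the target rates $R\asymp\xi$, $R_I\asymp\lambda$ is of the \emph{same} order as $R_I^2$, not strictly smaller---so the peeling does not close without either a sharper bilinear concentration bound (entropy/chaining on the product class, as in the van~de~Geer--Muro reference the paper cites) or an explicit small-constant argument. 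You should spell out precisely which inequality delivers the cross-term control and how the exponents combine; that is the one genuine gap in an otherwise correct plan.
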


\begin{remark} \label{overallrate}
	Suppose the assumptions in Theorems \ref{thm1} and \ref{thm2} hold. Let 
	$$\lambda^2 \asymp n^{-\frac{2r}{2r+1}},  \:\:\:\:  \xi^2\asymp n^{-\frac{2m}{2m+1}}.$$
	Taking $R^2 \asymp \xi^2$ and $R^2_I \asymp \lambda^2$, an upper bound for the overall rate of convergence for prediction follows from Theorem \ref{thm1}:
	$$\lVert \hat{f}-f^0\rVert^2=\mathcal{O}_p(n^{-2m/(2m+1)}),$$	
	which immediately implies the convergence rate for the nonparametric component:
		$$\lVert \hat{g}-g^0\rVert^2=\mathcal{O}_p(n^{-2m/(2m+1)}).$$	
	Theorem \ref{thm2} provides the convergence rate for functional component:
	$$\left\lVert \int_{\mathcal{T}} X(t)(\hat{\beta}(t)-\beta^0(t))dt  \right\rVert^2=\mathcal{O}_p(n^{-2r/(2r+1)}).$$	
\end{remark}

In summary, with an appropriate choice of parameters $\lambda$ and $\xi$, the prediction error, $\lVert \hat{g}-g^0\rVert^2$, is of order $n^{-2m/(2m+1)}$ in probability. This rate is solely determined by the sample size $n$ and the smoothness of the nonparametric component. The smoother functional component can be learned with the minimax rate. The prediction error, $\left\lVert \int_{\mathcal{T}} X(t)(\hat{\beta}(t)-\beta^0(t))dt\right\rVert^2$, is of order $n^{-2r/(2r+1)}$ in probability. The less smooth function $g^0$ does not affect the rate for the smoother functional component. We can see that, for each component, the rate of convergence can be obtained as if the other component were known.

\section{Numerical studies} \label{simulation}
In this section, simulations are conducted to measure the performance of the proposed double-penalized method and demonstrate the conclusions in Theorem \ref{thm1} and \ref{thm2} also hold in finite sample setting. We consider the case where $t \in\mathcal{T}=[0,1]$ and $z \in\mathcal{Z}=[0,1]$. The scalar variable $Z$ is a univariate variable generated from $U(0,1)$. Without loss of generality, we consider a special case where both $\beta^0$ and $g^0$ reside in the Sobolev space of order 2. Then we can re-write the minimization problem (\ref{defbetag}) as
\begin{equation} \label{simu_problem}
	(\hat{\beta},\hat{g})=\min_{\beta \in \mathcal{H}, g \in  \mathcal{G}} \left\{\frac{1}{n}\sum_{i=1}^{n} \left(Y_i-\int_{\mathcal{T}} X_i(t)\beta(t)dt-g(Z_i)\right)^2+\lambda^2 J(\beta)+\xi^2 I(g)\right\},
\end{equation}
where $\mathcal{H}=\mathcal{W}^2_2$ and $\mathcal{G}=\mathcal{W}^2_2$, and the penalty functionals $J(\beta)$ and $I(g)$ are defined as squared semi-norms on $\mathcal{H}$ and $\mathcal{G}$ respectively, i.e. $J(\beta)=\int_\mathcal{T}(\beta'')^2$ and $I(g)=\int_\mathcal{Z}(g'')^2$. The null space is a finite-dimensional linear subspace of $\mathcal{H}$ such that
 \[\mathcal{H}_0=\left\{ \beta\in\mathcal{H}: J(\beta)=0 \right\}.\]
We let $N=\dim(\mathcal{H}_0)$ and denote the orthonormal basis functions by $\left\{\Phi_1,...,\Phi_N\right\}$. In this case, we have $N=2$, and $\mathcal{H}_0$ is the linear space spanned by basis functions $\Phi_1(t)=1$ and $\Phi_2(t)=t$. Denote by $\mathcal{H}_1$ the orthogonal complement of $\mathcal{H}_0$ in $\mathcal{H}$ such that $\mathcal{H}=\mathcal{H}_0\oplus\mathcal{H}_1$. In other words, for any $\beta\in \mathcal{H}$, there exists a unique decomposition $\beta=\beta_0+\beta_1$ with $\beta_0\in\mathcal{H}_0 $ and $\beta_1\in\mathcal{H}_1 $. Note that $\mathcal{H}_1$ is also a reproducing kernel Hilbert space where the inner product of $\mathcal{H}$ is restricted to $\mathcal{H}_1$. We denote $K(\cdot,\cdot)$ as the reproducing kernel which corresponds to $\mathcal{H}_1$ such that $J(\beta_1)=\lVert \beta_1\rVert^2_K=\lVert \beta_1\rVert^2_\mathcal{H}$ for any $\beta_1\in \mathcal{H}_1$. A commonly used reproducing kernel for $\mathcal{H}_1$ is 
\[K(s,t)=\frac{1}{(2!)^2}B_2(s)B_2(t)-\frac{1}{4!}B_4(|s-t|),\]
where $B_l(\cdot)$ is the $l$-th Bernoulli polynomial. The penalty functional $I(g) $ is a squared semi-norm on $\mathcal{G}$ such that the null space 
\[\mathcal{G}_0=\left\{ \beta\in\mathcal{G}: I(g)=0 \right\}\]
is a finite-dimensional linear subspace of $\mathcal{G}$. We let $\tilde{N}=\dim(\mathcal{G}_0)$ and denote the orthonormal basis functions by $\left\{ \tilde{\Phi}_1,...,\tilde{\Phi}_{\tilde{N}}\right\}$. In this case, we have $\tilde{N}=2$, and $\mathcal{G}_0$ is the linear space spanned by basis functions $\tilde{\Phi}_1(z)=1$ and $\tilde{\Phi}_2(z)=z$. Denote by $\mathcal{G}_1$ the orthogonal complement of $\mathcal{G}_0$ in $\mathcal{G}$ such that $\mathcal{G}=\mathcal{G}_0\oplus\mathcal{G}_1$. In other words, for any $g\in \mathcal{G}$, there exists a unique decomposition $g=g_0+g_1$ with $g_0\in\mathcal{G}_0 $ and $g_1\in\mathcal{G}_1 $. Note that $\mathcal{G}_1$ is also a reproducing kernel Hilbert space where the inner product of $\mathcal{G}$ is restricted to $\mathcal{G}_1$. We denote $G(\cdot,\cdot)$ as the reproducing kernel which corresponds to $\mathcal{G}_1$ such that $I(g_1)=\lVert g_1\rVert^2_G=\lVert g_1\rVert^2_\mathcal{G}$ for any $g_1\in \mathcal{G}_1$. A commonly used reproducing kernel for $\mathcal{G}_1$ is 
\[G(z,w)=\frac{1}{(2!)^2}B_2(z)B_2(w)-\frac{1}{4!}B_4(|z-w|),\]
where $B_l(\cdot)$ is the $l$-th Bernoulli polynomial. In the following, we present the representer theorem (Lemma \ref{representer}) for this special case. The proof is similar to the proof of Lemma \ref{representer}.

\begin{lemma}
Let $\hat{\beta}$ and $\hat{g}$ be the minimizer of (\ref{simu_problem}), and $\hat{\beta}\in\mathcal{H}$, $\hat{g}\in \mathcal{G}$. Then there exist $\bm{d}=(d_1,d_2)^T$ and $\bm{c}=(c_1,...,c_n)^T\in \mathbb{R}^n$  such that
\begin{equation*} 
	\beta(t)=d_1+d_2t+\sum_{i=1}^{n}c_i\int_{\mathcal{T}}X_i(t)K(t,s)ds.
\end{equation*}
Moreover, there exist $\bm{l}=(l_1,l_2)^T$ and $\bm{\eta}=(\eta_1,...,\eta_n)^T\in \mathbb{R}^n$ such that
\begin{equation*}
	g(Z)=l_1+l_2Z+\sum_{i=1}^{n} \eta_i G(Z_i, Z).
\end{equation*}
\end{lemma}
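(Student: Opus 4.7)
The plan is to mimic the proof of Lemma \ref{representer} after first peeling off the finite-dimensional null spaces $\mathcal{H}_0$ and $\mathcal{G}_0$. Any $\beta\in\mathcal{H}=\mathcal{W}_2^2$ admits a unique decomposition $\beta=\beta_0+\beta_1$ with $\beta_0\in\mathcal{H}_0$ and $\beta_1\in\mathcal{H}_1$, so I can write $\beta_0(t)=d_1+d_2 t$. Similarly $g=g_0+g_1$ with $g_0(Z)=l_1+l_2 Z$ and $g_1\in\mathcal{G}_1$. The penalty functionals annihilate the null spaces, so $J(\beta)=\|\beta_1\|_K^2$ and $I(g)=\|g_1\|_G^2$, and the minimization over $\beta_1,g_1$ is exactly the setting of Lemma \ref{representer} with kernels $K$ and $G$ of $\mathcal{H}_1$ and $\mathcal{G}_1$.

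Next I would expand $\beta_1\in\mathcal{H}_1$ as $\beta_1(t)=\sum_{i=1}^{n}c_i (L_KX_i)(t)+v_1(t)$ where $v_1\in\mathcal{H}_1$ is $\langle\cdot,\cdot\rangle_K$-orthogonal to each $L_KX_i$, and likewise $g_1(Z)=\sum_{i=1}^{n}\eta_i G(Z,Z_i)+v_2(Z)$ with $v_2\in\mathcal{G}_1$ orthogonal to each $G(\cdot,Z_i)$ in $\langle\cdot,\cdot\rangle_G$. I then check that the objective (\ref{simu_problem}) is independent of $v_1$ and $v_2$ in the fidelity term and is minimized at $v_1=v_2=0$ in the penalty term. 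For the fidelity term, the key identity is the reproducing-type relation $\int_\mathcal{T} X_i(t)v_1(t)\,dt=\langle L_KX_i,v_1\rangle_K=0$ (see (\ref{observation})), so $\int X_i\beta\,dt$ depends only on $d_1,d_2,\bm{c}$, and similarly $g(Z_i)=\langle g,G(\cdot,Z_i)\rangle_G$ kills $v_2$, leaving a dependence only on $l_1,l_2,\bm{\eta}$. For the penalty, Pythagoras in $\mathcal{H}_1$ and $\mathcal{G}_1$ gives
\[
\lambda^2 J(\beta)=\lambda^2\Bigl\|\sum_{i=1}^n c_i L_KX_i\Bigr\|_K^2+\lambda^2\|v_1\|_K^2,\qquad \xi^2 I(g)=\xi^2\Bigl\|\sum_{i=1}^n\eta_i G(\cdot,Z_i)\Bigr\|_G^2+\xi^2\|v_2\|_G^2,
\]
both strictly decreased by setting $v_1=0$ and $v_2=0$ (while all other terms are untouched). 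Combining $\beta_0+\beta_1$ and $g_0+g_1$ with $v_1=v_2=0$ then yields the claimed representation.

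There are essentially no new obstacles compared with Lemma \ref{representer}; the only subtlety to verify carefully is that the penalty indeed controls only the $\mathcal{H}_1$ and $\mathcal{G}_1$ components (so that $d_1,d_2,l_1,l_2$ are left unpenalized but still uniquely determined by the fidelity term), and that the reproducing kernels $K$ and $G$ introduced for $\mathcal{H}_1$ and $\mathcal{G}_1$ satisfy the same identities $\int X_i(t)v_1(t)\,dt=\langle L_KX_i,v_1\rangle_K$ and $v_2(Z_i)=\langle v_2,G(\cdot,Z_i)\rangle_G$ used in Lemma \ref{representer}. These follow because $L_KX_i\in\mathcal{H}_1$ (as noted after (\ref{observation})) and by the reproducing property of $G$ on $\mathcal{G}_1$, so the argument goes through unchanged.
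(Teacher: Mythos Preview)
Your proposal is correct and follows exactly the route the paper indicates (``similar to the proof of Lemma \ref{representer}''): peel off the null spaces $\mathcal{H}_0,\mathcal{G}_0$, then apply the orthogonal-decomposition argument of Lemma \ref{representer} within $\mathcal{H}_1,\mathcal{G}_1$. One minor wording slip: it is $g_1(Z_i)=\langle g_1,G(\cdot,Z_i)\rangle_G$ (not $g$) that kills $v_2$, and the coefficients $d_1,d_2,l_1,l_2$ need only exist for the lemma, not be uniquely determined.
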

In the following, we adapt Algorithm \ref{algorithm1} proposed in Section \ref{algorithm} to this special case. We denote $\mathbf{G}$ as a $n\times n$ matrix, in which the $(i,j)$-th entry is $G(Z_i,Z_j)$. Let $\bm{1}$ be a column vector of 1. We define $\bm{\theta}:=(l_1,l_2,\bm{\eta}^T)^T\in \mathbb{R}^{n+2}$, $\mathbf{N}:=(\bm{1},\bm{Z},\mathbf{G})\in \mathbb{R}^{n\times(n+2)}$ and 
$$\mathbf{L}:=\left[ 
\begin{matrix}
	\mathbf{0}_{2\times2} & \mathbf{0}_{2\times n} \\
	\mathbf{0}_{n\times2} & \mathbf{G}_{ n\times n}
\end{matrix}
\right],$$Then we can reformulate (\ref{problem1}) as 
\begin{equation} \label{simu_problem2}
	\min_{\bm{d} \in \mathbb{R}^2, \bm{c} \in \mathbb{R}^n, \bm{\theta} \in \mathbb{R}^{n+2}} \left\{ \left\lVert \bm{Y}-(\mathbf{A}\bm{d}+\mathbf{\Sigma} \bm{c})-\mathbf{N}\bm{\theta}\right\rVert^2_n+\lambda^2 \bm{c}^T\mathbf{\Sigma}\bm{c}+\xi^2\bm{\theta}^T\mathbf{L}\bm{\theta}\right\}.
\end{equation}
where $\mathbf{\Sigma}$ is a $n\times n$ matrix with $\Sigma_{ij}=\int_{\mathcal{T}}\int_{\mathcal{T}}X_i(t)K(t,s)X_j(s)dsdt$, and $\mathbf{A}$ is a $n\times 2$ matrix with $A_{im}=\int_{\mathcal{T}}X_i(t)t^{m-1}dt, \; m=1,2.$ Minimizing (\ref{simu_problem2}) with respect to $\bm{d}$ and $\bm{c}$, we obtain
\begin{equation}\label{dandc}
	\begin{split}
		&\hat{\bm{d}}=(\mathbf{A}^T\mathbf{W}^{-1}\mathbf{A})^{-1}\mathbf{A}^{T}\mathbf{W}^{-1}(\bm{Y}-\mathbf{N}\bm{\theta}),\\
		&\hat{\bm{c}}=\mathbf{W}^{-1}[I-\mathbf{A}(\mathbf{A}^T\mathbf{W}^{-1}\mathbf{A})^{-1}\mathbf{A}^T\mathbf{W}^{-1}](\bm{Y}-\mathbf{N}\bm{\theta}),
	\end{split}	
\end{equation}
where $\mathbf{W}=\mathbf{\Sigma}+n\lambda^2 \mathbf{I}$. Then we have
\begin{equation*}
	\begin{split}
		\int_{\mathcal{T}}\bm{X}(t)\hat{\beta}(t)dt&=\mathbf{A}\hat{\bm{d}}+\mathbf{\Sigma}\hat{\bm{c}}\\
		&=[\mathbf{A}(\mathbf{A}^T\mathbf{W}^{-1}\mathbf{A})^{-1}\mathbf{A}^{T}\mathbf{W}^{-1}+\mathbf{\Sigma} \mathbf{W}^{-1}(\mathbf{I}-\mathbf{A}(\mathbf{A}^T\mathbf{W}^{-1}\mathbf{A})^{-1}\mathbf{A}^T\mathbf{W}^{-1})](\bm{Y}-\mathbf{N}\bm{\theta})	\\
		&=\mathbf{H}_{\lambda}(\bm{Y}-\mathbf{N}\bm{\theta}),
	\end{split}
\end{equation*}
where $\mathbf{H}_{\lambda}:=\mathbf{A}(\mathbf{A}^T\mathbf{W}^{-1}\mathbf{A})^{-1}\mathbf{A}^{T}\mathbf{W}^{-1}+\mathbf{\Sigma} \mathbf{W}^{-1}(\mathbf{I}-\mathbf{A}(\mathbf{A}^T\mathbf{W}^{-1}\mathbf{A})^{-1}\mathbf{A}^T\mathbf{W}^{-1})$. Now we can reformulate (\ref{problem2}) as
\begin{equation} \label{simu_problem3}
	\min_{\bm{\theta} \in \mathbb{R}^{n+2}} \left\{ \left\lVert (\mathbf{I}-\mathbf{H}_{\lambda})(\bm{Y}-\mathbf{N}\bm{\theta})\right\rVert^2_n +\xi^2\bm{\theta}^T\mathbf{L}\bm{\theta}\right\}.
\end{equation}
To solve (\ref{simu_problem3}), we minimize (\ref{simu_problem3}) with respect to $\bm{\theta}$ and obtain
\begin{equation} \label{thetahat}
	\hat{\bm{\theta}}=\left[\mathbf{N}^T(\mathbf{I}-\mathbf{H}_{\lambda})^T(\mathbf{I}-\mathbf{H}_{\lambda})\mathbf{N}+n\xi^2\mathbf{L}\right]^{-1}\mathbf{N}^T(\mathbf{I}-\mathbf{H}_{\lambda})^T(\mathbf{I}-\mathbf{H}_{\lambda})\bm{Y}.
\end{equation}
Now we can get $\hat{\bm{l}}$ and $\hat{\bm{\eta}}$. We substitute $\hat{\bm{\theta}}$ back into (\ref{dandc}) and immediately get $\hat{\bm{d}}$ and $\hat{\bm{c}}$. 

The target slope function $\beta^0(t)$ and the functional predictor $X(t)$ are given by
$$\beta^0(t)=\sum_{k=1}^{50}4(-1)^{k+1}k^{-2}\phi_k(t),\;\;\; X(t)=\sum_{k=1}^{50}\zeta_kX_k\phi_k(t),\;\;\; $$
where $\phi_1(t)=1$, $\phi_{k+1}(t)=\sqrt{2}\cos(k\pi t)$ for $k\geq 1$, $X_k \sim U(-\sqrt{3}, \sqrt{3})$ and $\zeta_k$ are defined as $\zeta_k=(-1)^{k+1}k^{-\upsilon_1/2}$. It is clear that $\zeta^2_k$ are the eigenvalues of covariance function of $X$. A larger $\upsilon_1$ indicates a smoother $X(t)$. The smoothness of $\beta^0$ is fixed. Hence, the parameter $\upsilon_1$ controls the smoothness of the whole functional component. For simplicity, we only consider a univariate scalar predictor. The true $g^0(z)$ is given by
$$g^0(z)=\sum_{k=1}^{50}4(-1)^{k+1}k^{-\upsilon_2}\phi_k(z)$$
where $z \in\mathcal{Z}=[0,1]$, $\phi_1(z)=1$ and $\phi_{k+1}(z)=\sqrt{2}\cos(k\pi z)$ for $k\geq 1$. The scalar random variable $Z_i,\; i=1,...,n$ is generated from $U(0,1)$. The smoothness of $g^0$ is controlled by $\upsilon_2$. A smaller value of $\upsilon_2$ corresponds to a less smooth $g^0$. The response Y are generated by 
\begin{equation} \label{generate}
	Y_i=\int^1_0 X_i(t)\beta^0(t) dt+g^0(Z_i)+\epsilon_i,
\end{equation}
where $\epsilon_i \sim N(0,1)$. 
The minimization problem (\ref{problem1}) involves two tuning parameters $\lambda$ and $\xi$. We can use grid search to select $\lambda$ and $\xi$. We choose the pair $(\lambda,\xi)$ which minimizes the GCV criterion 
$$GCV(\lambda,\xi)=\frac{ \lVert \hat{\bm{Y}}-\bm{Y}\rVert^2_n}{(1-tr(\mathbf{H}_\lambda)/n)^2}.$$
With different choices of $n$, $\upsilon_1$ and $\upsilon_2$, we generate different datasets and apply the aforementioned computing algorithm to each simulated dataset. The performance of estimating the functional component is examined by
\[\lVert\hat{\beta}-\beta^0\rVert^2_{n^*}=\frac{1}{n^*}\sum_{i=1}^{n^*}\left[ \int_{\mathcal{T}} X_i(t) (\hat{\beta}(t) -\beta^0)dt\right]^2,\]
and the performance of estimating the nonparametric component is examined by
\[\lVert\hat{g}-g^0\rVert^2_{n^*}=\frac{1}{n^*}\sum_{i=1}^{n^*}[ \hat{g}(Z_i) -g^0(Z_i)]^2.\] 
Note that 1000 independent testing datasets are used to evaluate the prediction errors, i.e. $n^*=1000$. We conduct simulations under three scenarios. In the first scenario, we consider the case where both $\beta^0$ and $g^0$ are unknown. We explore how smoothness of the functional and nonparametric component affect the prediction errors. In the second scenario, we consider the case where $g^0$ is known but $\beta^0$ is unknown. We compare the results obtained with the first scenario to investigate the influence of availability of $g^0$ on estimating the functional component. In the third scenario, we consider the case where $\beta^0$ is known but $g^0$ is unknown to study the influence of availability of $\beta^0$ on estimating the nonparametric component.

\subsection{The case where both $\beta^0$ and $g^0$ are unknown} \label{bothunknown}
In this case, we assume neither functions are known. Scenarios with $n=50,100,200,500$, $\upsilon_1=1.1,1.5,2,4$ and $\upsilon_2=1.5,1.7,2,4$ are considered. For each scenario, we repeat the experiment for 100 times and record the averaged performance measures. For a better illustration, we plot the averaged prediction errors aginst the sample size $n$ for each choice of $\upsilon_1$ and $\upsilon_2$ (see Figure \ref{Figure1} and \ref{Figure2}). Note that both axaes are in log scale. 

In Figure \ref{Figure1}, we plot the averaged prediction errors, $\lVert\hat{\beta}-\beta^0\rVert^2_{n^*}$ and $\lVert\hat{g}-g^0\rVert^2_{n^*}$, aginst the sample size $n$ when $\upsilon_1=1.1,1.5,2,4$ and $\upsilon_2$ is fixed to be 1.5. The trend of the averaged prediction error, $\lVert\hat{\beta}-\beta^0\rVert^2_{n^*}$, for each choice of $\upsilon_1$ is shown in the left panel of Figure \ref{Figure1}. From the plot, one can observe that the prediction error, $\lVert\hat{\beta}-\beta^0\rVert^2_{n^*}$, gradually converges to 0 as sample size $n$ increases for any $\upsilon_1$. Moreover, the plot suggests that with the same sample size $n$, a larger value of $\upsilon_1$ could lead to a smaller prediction error $\lVert\hat{\beta}-\beta^0\rVert^2_{n^*}$. The right panel of Figure \ref{Figure1} presents the trend of the averaged prediction error, $\lVert\hat{g}-g^0\rVert^2_{n^*}$, for each choice of $\upsilon_1$. We observe that the prediction error of the nonparametric component also converges to 0 as sample size $n$ increases for any $\upsilon_1$. Unlike the prediction error of the functional component, there is no significant difference in the prediction error, $\lVert\hat{g}-g^0\rVert^2_{n^*}$, with different choices of $\upsilon_1$. To summarize, Figure \ref{Figure1} indicates that the convergence rate of the prediction error, $\lVert\hat{\beta}-\beta^0\rVert^2_{n^*}$, is controlled by both of the sample size and the smoothness of the functional component. This prediction error tends to be smaller if the functional component gets smoother. However, the smoothness of the functional component has no impact on the prediction error of the nonparametric component. One can see that the observations from Figure \ref{Figure1} confirms our theories in Section \ref{results}.

\begin{figure} [htb]
	\centering
	\includegraphics[scale=0.55]{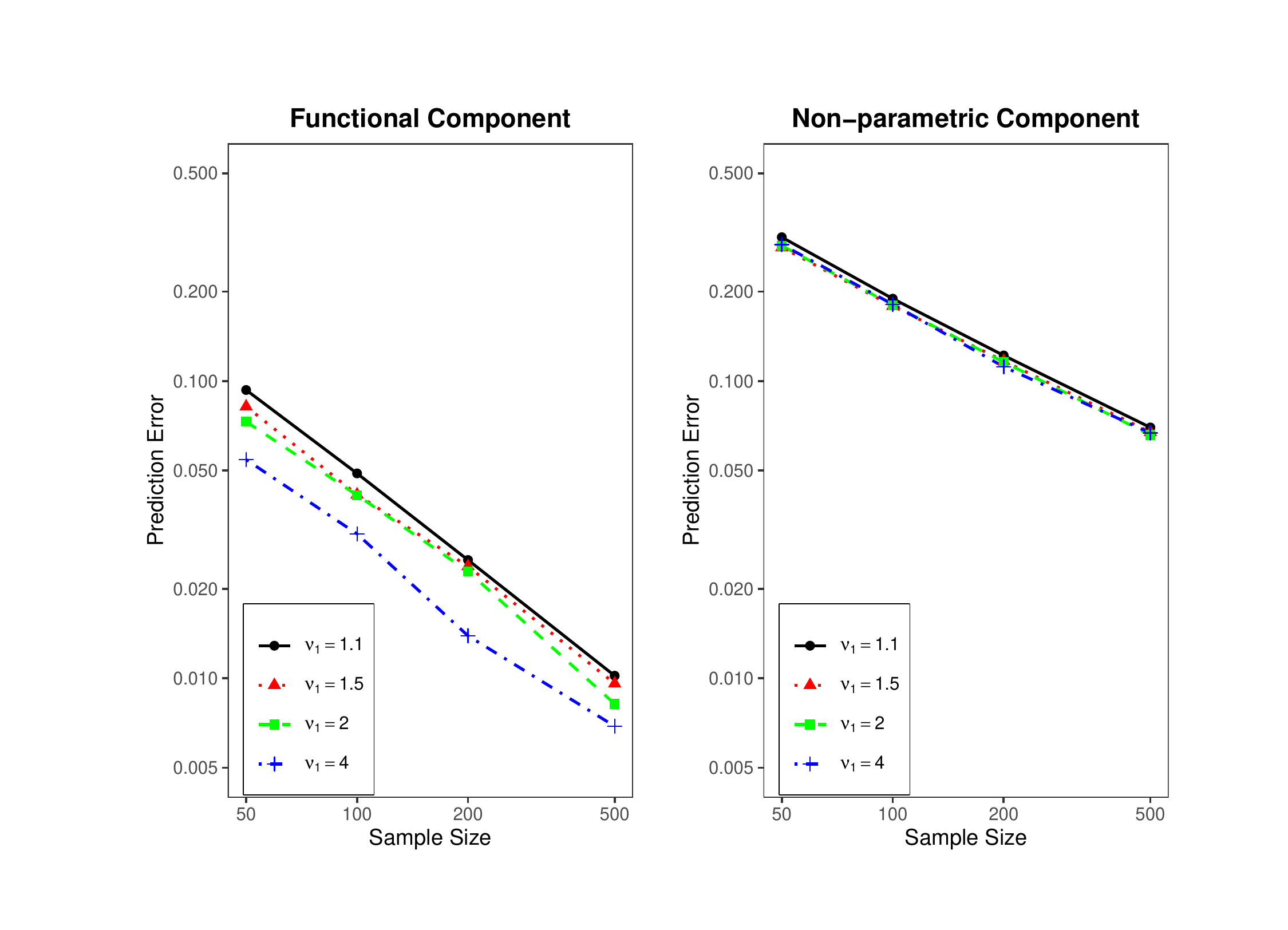}
	\caption{\textit{Prediction errors of the proposed estimators with} $\upsilon_2=1.5$. \textit{Both axes are in log scale. The performance measure is averaged over 100 simulations. The black solid lines, red dotted lines, green dashed lines and blue dot-dashed lines correspond to the scenarios where $\upsilon_1=1.1,\;1.5,\;2$ and $4$ respectively.}}
	\label{Figure1}
\end{figure}

Next, in Figure \ref{Figure2}, we fix the value of $\upsilon_1$ to be 1.1 and change $\upsilon_2$. Cases where $\upsilon_2=1.5,1.7,2,4$ are considered. We plot the averaged prediction errors, $\lVert\hat{\beta}-\beta^0\rVert^2_{n^*}$ and $\lVert\hat{g}-g^0\rVert^2_{n^*}$, aginst the sample size $n$ in the left panel and right panel of Figure \ref{Figure2} respectively. Similarly to Figure \ref{Figure1}, one can observe that both $\lVert\hat{\beta}-\beta^0\rVert^2_{n^*}$ and $\lVert\hat{g}-g^0\rVert^2_{n^*}$ converge to 0 as sample size $n$ increases. From the right panel, we can observe that the prediction error of the nonparametric component gets smaller if the value of $\upsilon_2$ becomes larger with the same sample size. By contrast, the value of $\upsilon_2$ does not affect the prediction error of the functional component. These phenomenon again confirms our theories in Section \ref{results}.

\begin{figure} [htb]
	\centering
	\includegraphics[scale=0.55]{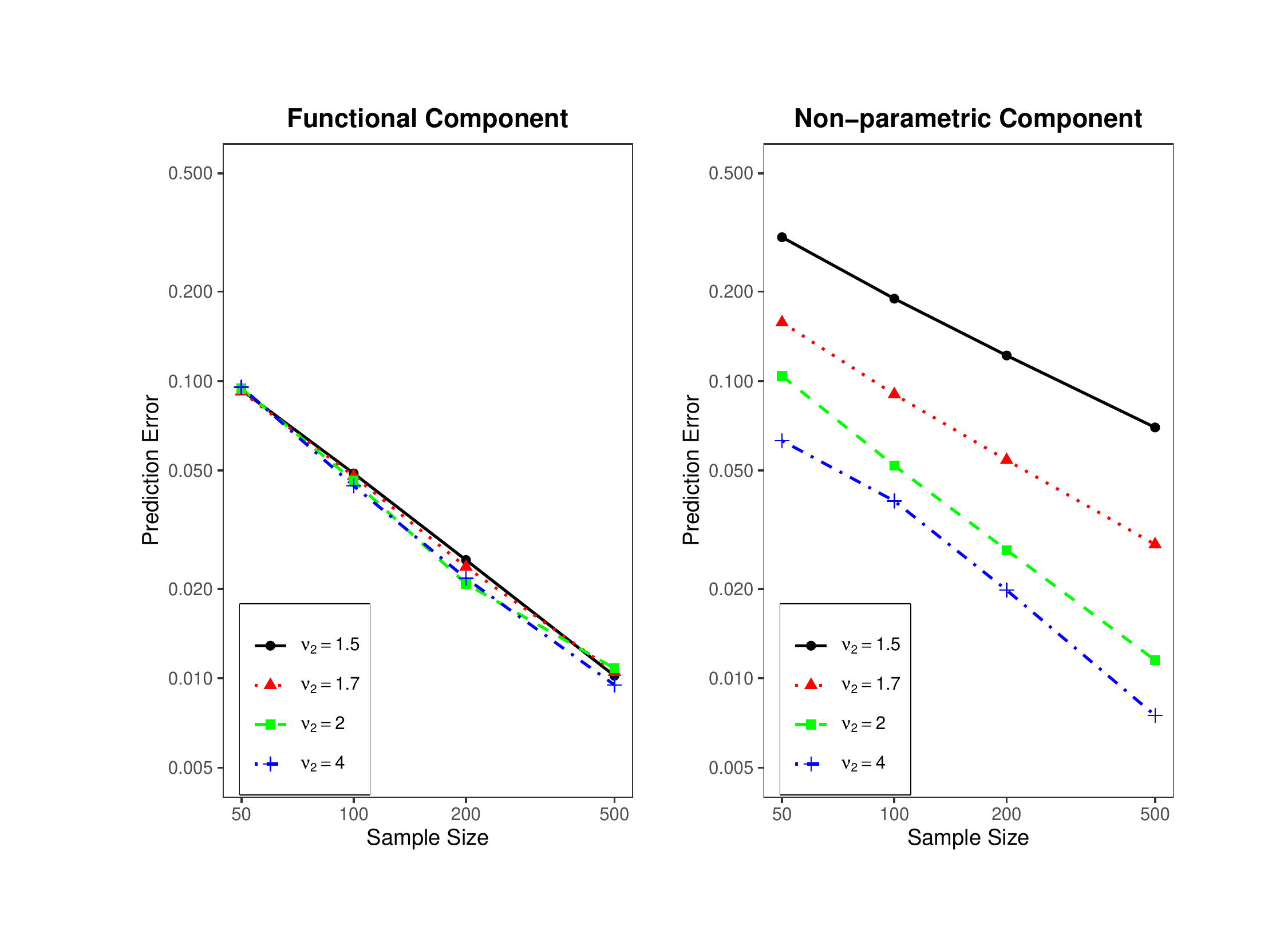}
	\caption{\textit{Prediction errors of the proposed estimators with} $\upsilon_1=1.1$. \textit{Both axes are in log scale. The performance measure is averaged over 100 simulations. The black solid lines, red dotted lines, green dashed lines and blue dot-dashed lines correspond to the scenarios where $\upsilon_2=1.5,\;1.7,\;2$ and $4$ respectively.}}
	\label{Figure2}
\end{figure}

\subsection{The case where $g^0$ is known} \label{g0known}
In this case, we assume that $g^0$ is known and estimate the functional component. If $g^0$ is known, we can re-write (\ref{generate}) as
$$Y^{\beta}_i=\int^1_0 X_i(t)\beta^0(t) dt+\epsilon_i,$$ 
where $Y^{\beta}_i:=Y_i-g^0(Z_i)$. The oracle estimator of $\beta^0$ is
$$\hat{\beta}^{oracle}=\min_{\beta \in \mathcal{H}} \left\{\frac{1}{n}\sum_{i=1}^{n} \left(Y^{\beta}_i-\int_{\mathcal{T}} X_i(t)\beta(t)dt\right)^2+\lambda^2 \lVert \beta \rVert^2_{\mathcal{H}}\right\}.$$
The value of $\upsilon_2$ is fixed and taken to be 1.5 since our simulation results in Section \ref{bothunknown} already show that the smoothness of $g^0$ has no influence on the prediction error $\lVert\hat{\beta}-\beta^0\rVert^2_{n^*}$. Scenarios with $n=50,100,200,500$ and $\upsilon_1=1.1,1.5,2,4$ are considered. For each scenario, we repeat the experiment for 100 times and record the averaged performance measures. In the left panel of Figure \ref{Figure3}, we plot the averaged prediction errors aginst the sample size $n$ for each choice of $\upsilon_1$. Note that both axaes are in log scale. By comparing the left panel of Figure \ref{Figure3} with the left panel of Figure \ref{Figure1}, we find that there is no siginificant difference bewteen these two figures. Given the true $g^0$, we still obtain similar prediction errors for the functional component. In other words, the functional component can be well-estimated even if the true $g^0$ is unknown, which coincides with our theorems.


\subsection{The case where $\beta^0$ is known} \label{beta0known}
In this case, we assume that $\beta^0$ is known and estimate the nonparametric component. If $\beta^0$ is known, we can re-write (\ref{generate}) as
$$Y^g_i=g(Z_i)+\epsilon_i,$$ 
where $Y^g_i:=Y_i-\int^1_0 X_i(t)\beta^0(t) dt$. The oracle estimator of $g^0$ is
$$\hat{g}^{oracle}=\min_{g \in \mathcal{G}} \left\{\frac{1}{n}\sum_{i=1}^{n} \left(Y^g_i-g(Z_i)\right)^2+\xi^2 \lVert g \rVert^2_{\mathcal{G}}\right\}.$$
The value of $\upsilon_1$ is fixed and taken to be 1.1 since our simulation results in Section \ref{bothunknown} already show that the smoothness of the functional component has no influence on the prediction error $\lVert\hat{g}-g^0\rVert^2_{n^*}$. Scenarios with $n=50,100,200,500$ and $\upsilon_2=1.5,1.7,2,4$ are considered. For each scenario, we repeat the experiment for 100 times and record the averaged performance measures. In the right panel of Figure \ref{Figure3}, we plot the averaged prediction errors aginst the sample size $n$ for each choice of $\upsilon_2$. Note that both axaes are in log scale. We compare the right panel of Figure \ref{Figure3} with the right panel of Figure \ref{Figure2} and find that the rate of convergence of $\hat{g}$ and $\hat{g}^{oracle}$ are of similar order for each choice of $\upsilon_2$. Therefore, one can see that the performance of estimating $g^0$ does not improve by incorporating the information of true $\beta^0$, which is in agreement with our theories.

\begin{figure} [!htbp]
	\centering
	\includegraphics[scale=0.55]{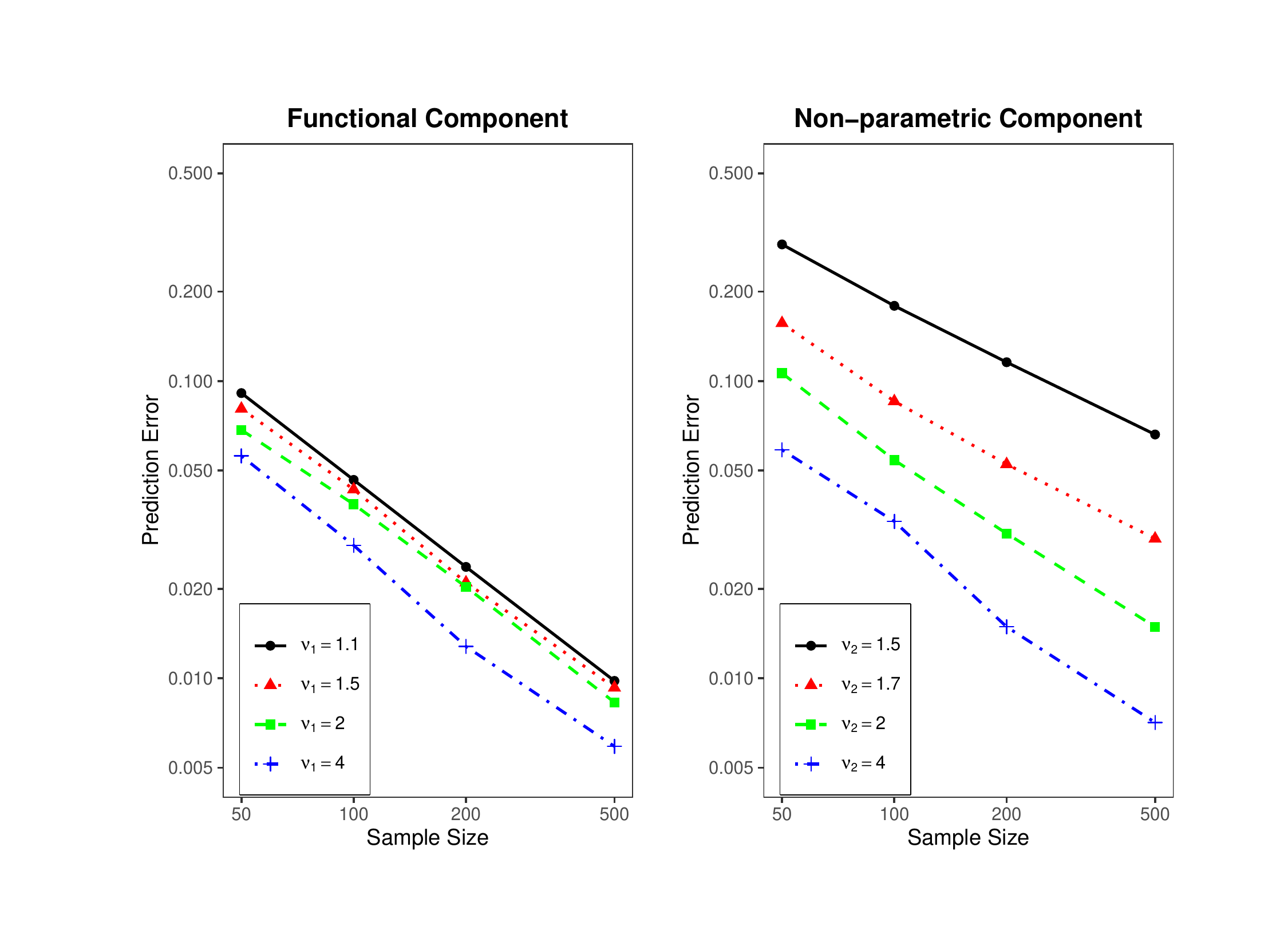}
	\caption{\textit{Left panel: Prediction errors of $\hat{\beta}^{oracle}$ with fixed $\upsilon_2=1.5$. The black solid lines, red dotted lines, green dashed lines and blue dot-dashed lines correspond to the scenarios where $\upsilon_1=1.1,\;1.5,\;2$ and $4$ respectively. Right panel: Prediction errors of $\hat{g}^{oracle}$ with fixed $\upsilon_1=1.1$. The black solid lines, red dotted lines, green dashed lines and blue dot-dashed lines correspond to the scenarios where $\upsilon_2=1.5,\;1.7,\;2$ and $4$ respectively. The performance measure is averaged over 100 simulations. Both axes are in log scale.}}
	\label{Figure3}
\end{figure}


In summary, our simulation results in Section \ref{bothunknown} suggest that the convergence rate for the functional component is solely determined by the smoothness of the functional component and sample size $n$. The rate gets faster as the functional component gets smoother. Moreover, the convergence rate for the nonparametric component is completely determined by the smoothness of the nonparametric component and sample size $n$. Smoother $g^0$ leads to a faster rate of convergence. More importantly, results in Section \ref{g0known} indicate that the convergence rate for the functional component when $g^0$ is unknown is of the same order than that when $g^0$ is known. The results in Section \ref{beta0known} suggest that an accurate estimate of $\beta^0$ does not improve the estimation of $g^0$. The simulation results are in accordance with our theoretical findings in Section \ref{results} where we show the prediction error of the nonparametric component declines at the rate of $\mathcal{O}_p(n^{-2m/(2m+1)})$; whereas the prediction error of the functional component has a rate of convergence $\mathcal{O}_p(n^{-2r/(2r+1)})$. One can also apply our Algorithm \ref{algorithm1} to the case where the scalar variable is multi-dimensional. There are several available kernels including the commonly used Gaussian kernel $G(\bm{z},\bm{w})=\exp(-\lVert\bm{z}-\bm{w}\rVert^2/\sigma^2)$ with tuning parameter $\sigma$ and the polynomial kernel $G(\bm{z},\bm{w})=(\bm{z}^T\bm{w}+1)^d$ with tuning parameter $d$.

\section{Conclusion} \label{conclusion}
In this paper, we propose the double-penalized least squares method for the semi-functional partial linear model within the framework of reproducing kernel Hilbert space (RKHS). At the same time, we introduce an efficient algorithm that requires no iterations to solve for the proposed estimators. Furthermore, under regularity conditions, we establish the minimax optimal rates of convergence for prediction in the semi-functional linear model. We also prove that, with an appropriate choice of regularization parameters, the rate of convergence for the nonparametric component is solely determined by the sample size and the smoothness of the nonparametric component. Additionally, the sample size and the smoothness of the functional component completely determine the rate of convergence for the functional component. Most importantly, for each component, the rate of convergence can be obtained as if the other component were known. The simulation studies further demonstrate the effectiveness of proposed estimators and confirm our theoretical results. 

\section*{Acknowledgments}
The work by J. Fan is partially supported by the Research Grants Council of Hong Kong [Project No. HKBU 12303220] and National Natural Science Foundation of China [Project No. 11801478]. The work by L. Zhu is partially supported by the Research Grants Council of Hong Kong [Project Nos. HKBU12303419 and HKBU12302720] and National Natural Science Foundation of China [Project No. 11671042].

\end{document}